\documentclass[11pt,a4paper,oneside,english]{amsart}

\usepackage{geometry}
\usepackage{xcolor}
\usepackage{lineno}
\usepackage[english]{babel}
\usepackage{amssymb,enumerate,bbm,amsmath}
\usepackage{tikz}
\usepackage{indentfirst}
\usepackage{booktabs}

\numberwithin{equation}{section}
\newtheorem{theorem}{Theorem}[section]
\newtheorem{corollary}[theorem]{Corollary}
\newtheorem{lemma}[theorem]{Lemma}
\newtheorem{proposition}[theorem]{Proposition}
\newtheorem{conjecture}[theorem]{Conjecture}
\newtheorem{remark}[theorem]{Remark}

\newtheorem{question}[theorem]{Question}

\newcommand{\NH}{\mathcal{N}_H}
\newcommand{\V}[1]{\mathcal{V}_{#1}}
\newcommand{\ip}[2]{\langle #1,#2\rangle}
\newcommand{\norm}[1]{\lVert #1\rVert}
\newcommand{\orthsum}{\mathbin{\oplus^{\perp}}}

\newcommand{\RR}{\mathbb{R}}
\newcommand{\Sym}{\operatorname{Sym}}
\newcommand{\tr}{\operatorname{tr}}
\newcommand{\dist}{\operatorname{dist}}
\newcommand{\Cbar}{\bar C}
\newcommand{\Phiv}{\Phi}
\newcommand{\mathd}{\mathrm{d}}

\usepackage{hyperref}
\usepackage{cleveref} 
\hypersetup{colorlinks,linkcolor={blue},citecolor={blue},urlcolor={blue}}
\allowdisplaybreaks
\begin{document}
 
\title[The pinching constant for closed minimal submanifolds]{The pinching constant for closed minimal submanifolds of high codimension in the sphere}

\author{Hongwei Xu}
\address{Center of Mathematical Sciences, Zhejiang University, Hangzhou, 310027, People's Republic of China}
\email{xuhw@zju.edu.cn}

\author{Entao Zhao}
\address{Center of Mathematical Sciences, Zhejiang University, Hangzhou, 310027, People's Republic of China}
\email{zhaoet@zju.edu.cn}

\begin{abstract}
Let $M^n$ be a closed minimal submanifold in the unit sphere $\mathbb{S}^{n+q}$ with $n\geqslant 3$ and $q\geqslant 2$. Let $S$ be the squared length of its second
fundamental form and $S_{\max}=\max_{p\in M}S(p)$. We prove that if $M$ is not totally geodesic, then 
\[
S_{\max}>\frac{2n}{3}+\frac{n-2}{182}.
\]
\end{abstract}

\date{\today}

{\maketitle}

\tableofcontents

\section{Main theorem}

In 1968, Simons proved a famous rigidity theorem for closed minimal submanifolds in the unit sphere.

\begin{theorem} [Simons \cite{J.Simons1968}]
Let $ M^n $ be an  $n$-dimensional closed minimal submanifold in the unit sphere $\mathbb{S}^{n+q}$. Denote by $S$ the squared length of the second fundamental form of $M$. Then the following inequality holds:
\begin{equation*}
\int_{M^n} S \left( S - \frac{n}{2-1/q} \right)  \geqslant 0.
\end{equation*}
In particular, if $0 \leqslant S \leqslant \frac{n}{2-1/q}$, then either $S\equiv0$ and $M^n$ is the totally geodesic sphere in $\mathbb{S}^{n+q}$, or $S\equiv \frac{n}{2-1/q}$.
\end{theorem}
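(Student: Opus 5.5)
The plan is to follow Simons' Bochner-type approach: compute $\frac12\Delta S$ by the Simons identity, bound the zeroth-order term from below by an algebraic matrix inequality, and integrate over the closed manifold $M$.

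\textbf{Setup and Simons identity.} Choose a local orthonormal frame $e_1,\dots,e_n$ tangent to $M$ and $e_{n+1},\dots,e_{n+q}$ normal to $M$. Write the second fundamental form as symmetric matrices $A_\alpha=(h^\alpha_{ij})$, $\alpha=n+1,\dots,n+q$. Minimality means $\tr A_\alpha=0$ for every $\alpha$, and $S=\sum_\alpha \norm{A_\alpha}^2$. Differentiating covariantly twice, and using the Codazzi equation, the Ricci identities and the Gauss equation for the unit sphere, we obtain the Simons formula
\begin{equation*}
\frac12\Delta S=\sum_{i,j,k,\alpha}(h^\alpha_{ijk})^2+nS-\sum_{\alpha,\beta}\norm{[A_\alpha,A_\beta]}^2-\sum_{\alpha,\beta}\bigl(\tr(A_\alpha A_\beta)\bigr)^2 .
\end{equation*}

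\textbf{Algebraic lemma.} The core step is to show that for real symmetric matrices $A_1,\dots,A_q$,
\begin{equation*}
\sum_{\alpha,\beta}\norm{[A_\alpha,A_\beta]}^2+\sum_{\alpha,\beta}\bigl(\tr(A_\alpha A_\beta)\bigr)^2\leqslant\Bigl(2-\frac1q\Bigr)S^2 .
\end{equation*}
Both sides are invariant under an orthogonal change of the normal frame, so we may rotate the frame to make the Gram matrix $(\tr(A_\alpha A_\beta))$ diagonal. With $\sigma_\alpha=\norm{A_\alpha}^2$, the second sum becomes $\sum_\alpha\sigma_\alpha^2$. For each pair we use the elementary bound $\norm{[A,B]}^2\leqslant 2\norm{A}^2\norm{B}^2$, which follows from $\norm{AB-BA}^2=2\tr(A^2B^2)-2\tr(ABAB)$ together with Cauchy--Schwarz. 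This yields
\begin{equation*}
\mathrm{LHS}\leqslant \sum_{\alpha\ne\beta}2\sigma_\alpha\sigma_\beta\cdot 2+\sum_\alpha\sigma_\alpha^2 .
\end{equation*}
Here the factor $2$ in front of the double sum counts both orders $(\alpha,\beta)$ and $(\beta,\alpha)$, so the right side equals $2\bigl(S^2-\sum_\alpha\sigma_\alpha^2\bigr)+\sum_\alpha\sigma_\alpha^2=2S^2-\sum_\alpha\sigma_\alpha^2$. By Cauchy--Schwarz, $\sum_\alpha\sigma_\alpha^2\geqslant S^2/q$, and the claimed inequality follows. I expect this lemma to be the main obstacle: getting exactly the constant $2-1/q$ depends on the pairing bookkeeping above and on first diagonalizing the Gram matrix.

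\textbf{Integration and the rigidity statement.} Integrate the Simons formula over the closed manifold $M$; the integral of $\Delta S$ vanishes by the divergence theorem. Dropping the nonnegative gradient term and applying the algebraic lemma gives
\begin{equation*}
0\geqslant\int_M\Bigl(nS-\Bigl(2-\frac1q\Bigr)S^2\Bigr),
\end{equation*}
which is equivalent to $\int_M S\bigl(S-\frac{n}{2-1/q}\bigr)\geqslant 0$. Now suppose $0\leqslant S\leqslant \frac{n}{2-1/q}$. Then the integrand is pointwise nonpositive, so it must vanish identically. Hence at each point either $S=0$ or $S=\frac{n}{2-1/q}$. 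Since $S$ is continuous and $M$ is connected, $S$ is constant and equals one of these two values. If $S\equiv0$, then $M$ is totally geodesic, hence it is a great sphere $\mathbb{S}^n$ in $\mathbb{S}^{n+q}$.
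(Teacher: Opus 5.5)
Your proposal is correct and is the classical Simons/Chern--do Carmo--Kobayashi argument. The paper only cites this theorem, but its Section~2 runs the same template: the Simons identity \eqref{eq:simons}, a pointwise bound on $\sum_{\alpha,\beta}\langle A_\alpha,A_\beta\rangle^2+\sum_{\alpha,\beta}\norm{[A_\alpha,A_\beta]}^2$, and integration as in \eqref{eq:intid}, with your constant $2-1/q$ replaced by the Li--Li constant $3/2$.

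Your algebraic lemma needs two local repairs.
\begin{enumerate}
\item The bound $\norm{[A,B]}^2\leqslant 2\norm{A}^2\norm{B}^2$ does not follow from the trace identity plus a plain Cauchy--Schwarz. The chain $-\tr(ABAB)\leqslant\norm{AB}^2=\tr(A^2B^2)\leqslant\norm{A}^2\norm{B}^2$ only gives the constant $4$, which would spoil $2-1/q$. Instead, diagonalize $A=\operatorname{diag}(\lambda_1,\ldots,\lambda_n)$ and write $B=(b_{ij})$ in that basis. Then $\norm{[A,B]}^2=\sum_{i\neq j}(\lambda_i-\lambda_j)^2b_{ij}^2$, and $(\lambda_i-\lambda_j)^2\leqslant 2(\lambda_i^2+\lambda_j^2)\leqslant 2\norm{A}^2$ gives the constant $2$. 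Alternatively, cite B\"ottcher--Wenzel, as the paper does in Lemma~\ref{lem:distS}.
\item With the extra factor $2$, your displayed sum must run over $\alpha<\beta$, not $\alpha\neq\beta$. The value $2S^2-\sum_\alpha\sigma_\alpha^2$ you then use is correct.
\end{enumerate}
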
 

Later, for closed minimal hypersurfaces in the unit sphere $\mathbb{S}^{n+1}$,  Lawson {\cite{Lawson}}  and Chern--do Carmo--Kobayashi {\cite{Chern}} independently proved that if $S\equiv n$, then $M^n$ is one of the Clifford minimal hypersurfaces $ \mathbb{S}^k\big(\sqrt{k/n}\big) \times \mathbb{S}^{n-k}\big(\sqrt{{(n-k)}/{n}}\big) $ in $ \mathbb{S}^{n+1} $, $ 1 \leqslant k \leqslant n-1 $.  Chern--do Carmo--Kobayashi {\cite{Chern}}  also proved that if $q\geqslant 2$ and $S\equiv {n}/{(2-1/q)}$, then  $n=2$, $q=2$ and $M$ is the Veronese surface in $\mathbb{S}^{4}$. Afterwards, Li--Li \cite{LiLi}  improved the pinching constant to $2n/3$ for codimension $q\geqslant 2$. Chen--Xu \cite{ChenXu} also gave a proof for the rigidity result by using a different argument. In 1990, Xu \cite{Xu-phd,Xu-archmath} proved the generalized Simons-Lawson-Chern-do Carmo-Kobayashi-Li-Li theorem for closed submanifolds with parallel mean curvature in a sphere. Further discussions in this direction have been carried out by
Xu and others; see, for example, \cite{Gu-Xu-Xu-Zhao,Shiohama-Xu,Xu-tams}.

The following question was proposed in \cite{Chern-lecture,Chern}.

\begin{question}[\cite{Chern-lecture,Chern}]\label{Chern-question}
Consider closed minimal submanifolds $M^n$ in the $(n+q)$-dimensional unit sphere $\mathbb{S}^{n+q}$ such that $S$ is a constant. It is plausible that the set of values for $S$ is discrete, at least for $S$ not arbitrarily large. If this is the case, an estimate of the value for $S$ next to $\frac{n}{2-1/q}$ should be of interest. 
\end{question}

The discreteness assertion in Question \ref{Chern-question} has traditionally been referred to as the Chern conjecture.

\begin{conjecture}[Chern Conjecture \cite{Chern-lecture,Chern}]
Let $M^n$ be a closed minimal submanifold with constant scalar curvature in the $(n+q)$-dimensional unit
sphere $\mathbb{S}^{n+q}$. Then for each $n$ and $q$, the set of all possible values for $S$ is discrete.
\end{conjecture}

There has been considerable progress on the Chern conjecture, especially in the codimension-one case.  
In 1982, Yau {\cite{Yau}} listed the codimension-one case of the Chern conjecture as one of the 120 unsolved geometric problems.  In 1983, Peng--Terng {\cite{PengTerng1,PengTerng2}} made the first breakthrough towards the Chern conjecture, proving a second gap theorem for closed minimal hypersurfaces with constant scalar curvature in the unit sphere $\mathbb{S}^{n+1}$. More precisely, if $S > n$, then $S > n + \frac{1}{12n}$. Moreover, for $n = 3$, they proved that $S \geqslant 6$ provided $S > 3$.  In 1993, Chang {\cite{Chang1}} completed the proof of the codimension-one case of the Chern conjecture for $n = 3$. In high dimensions, there have been several estimates on the second gap; see e.g., Yang-Cheng {\cite{YangCheng1,YangCheng2,YangCheng3}} and Suh--Yang {\cite{SuhYang}}. Recently, all closed minimal hypersurfaces with constant scalar curvature and either constant third mean curvature or constant Gauss--Kronecker curvature in $\mathbb{S}^{5}$ were classified \cite{Deng,Deng-Kou,Ge-Liu-Luo-Yan,He-Xu-Zhao}. If the scalar curvature is not required to be constant, then the problem becomes more difficult. Peng--Terng {\cite{PengTerng1,PengTerng2}} proved that, for closed minimal hypersurfaces in $\mathbb{S}^{n+1}$, there exists a positive constant $\delta(n)$ depending only on $n$, such that if $n \leqslant S \leqslant n + \delta(n)$, $n \leqslant 5$, then $S \equiv n$.  Later, Wei--Xu {\cite{WeiXu}} extended the result to $n = 6, 7$ and Zhang {\cite{Q.Zhang}} extended the result to  $n \leqslant 8$.  Finally, Ding--Xin {\cite{DingXin}} extended it to all dimensions by establishing a new estimate; in particular, they showed that if the dimension is $n \geqslant 6$, then the pinching constant $\delta(n) = \frac{n}{23}$. After that, Xu--Xu {\cite{XuXu}} improved it to $\delta(n) = \frac{n}{22}$ and Lei--Xu--Xu {\cite{LeiXuXu}} showed $\delta(n) = \frac{n}{18}$. For more results on codimension-one case of the Chern conjecture and its generalization, please consult \cite{deAlmeida,Chang2,TongzhuLi,Cheng-Wei-Ya,Ge-Tang,Gu-Lei-Xu,LeiXuXu-SciChina,LeiXuXu-pams,Lusala1,SuhYang,XiaoLing,Tang-Yang,Tang-Wei-Yan,Tang-Yan,Xu-Tian,XuXu-mathann,XuXu-jfa,XuXu-SciSinMath}, etc.

The Chern conjecture becomes substantially more difficult in higher codimensions. One of the crucial reasons is that the second fundamental form becomes significantly more intricate in higher codimensions. Recently, that some progress has been made on the Chern conjecture. Ge--Li--Zhang \cite{Ge-Li-Zhang} showed that, for a closed minimal submanifold $M^n$ in the unit sphere $\mathbb{S}^{n+q}$ with flat normal bundle, where $n\geqslant 3$ and $q\geqslant 2$, if $S$ is constant and satisfies $0\leqslant S \leqslant \delta(n,q)$, where $\delta(n,q)$ is a certain positive constant depending only on $n$ and $q$, then $M$ is either totally geodesic, or one of the Clifford minimal hypersurfaces  $ \mathbb{S}^k\big(\sqrt{k/n}\big) \times \mathbb{S}^{n-k}\big(\sqrt{{(n-k)}/{n}}\big) $ in $ \mathbb{S}^{n+1} $, $ 1 \leqslant k \leqslant n-1 $. Lei \cite{Lei} proved that an $n$-dimensional $(n\geqslant 3)$ closed minimal submanifold with constant scalar curvature in $\mathbb{S}^{n+q}$ satisfies  $S =0$ or $S> \frac{2n}{3}+ \frac{1}{32n^3}$. For an $n$-dimensional $(n\geqslant 3)$ non-totally geodesic closed minimal submanifold in $\mathbb{S}^{n+q}$ without assuming that $S$ is constant, Lei \cite{Lei} verified that $S_{\max}>\frac{2n}{3}+\frac{22n-64}{39^2n^4q}$, and Li--Zhao \cite{LiZhao} proved that $S_{\max}\geqslant\frac{2n}{3}+\frac{n-2}{6300(39n+8)}$. These two results concern the quantitative first-gap problem near the Li--Li threshold $\frac{2n}{3}$. 

In this paper, we prove the following theorem.

\begin{theorem}\label{thm:main}
Let $M^n$ be an $n$-dimensional closed minimal submanifold in the $(n+q)$-dimensional unit
sphere $\mathbb{S}^{n+q}$, where $n\geqslant  3$ and $q\geqslant 2$. If $M$ is not totally geodesic,
then
\[
 S_{\max}>\frac{2n}{3}+\frac{n-2}{182}.
\]
\end{theorem}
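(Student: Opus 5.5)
Argue by contradiction: suppose $M$ is not totally geodesic and $S\leqslant \frac{2n}{3}+\delta$ on $M$, with $\delta=\frac{n-2}{182}$. The starting point is Simons' identity in the standard Li--Li form. With $A_\alpha=(h^\alpha_{ij})$ the shape operators in an orthonormal normal frame, $\sigma_{\alpha\beta}=\tr(A_\alpha A_\beta)$, and $N(X)=\tr(X^tX)$,
\[
\tfrac12\Delta S=|\nabla h|^2+nS-\sum_{\alpha,\beta}N(A_\alpha A_\beta-A_\beta A_\alpha)-\sum_{\alpha,\beta}\sigma_{\alpha\beta}^2 .
\]
The Li--Li matrix inequality gives $\sum N([A_\alpha,A_\beta])+\sum\sigma_{\alpha\beta}^2\leqslant \frac32 S^2$. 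Integrating the identity then yields
\[
\int_M|\nabla h|^2\leqslant \int_M S\Bigl(\tfrac32S-n\Bigr)\leqslant \tfrac32\delta\int_M S .
\]
So under the hypothesis the gradient of $h$ is small in $L^2$, of order $\delta$ times $\int S$.

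Next I would extract a lower bound for $|\nabla h|^2$ in terms of $S$ that beats this. Near $S=\frac{2n}{3}$ the algebraic "defect"
\[
D=\tfrac32S^2-\sum N([A_\alpha,A_\beta])-\sum\sigma_{\alpha\beta}^2\geqslant 0
\]
is small only when, at each point, the second fundamental form is close to the Veronese-type extremal configuration: exactly two nonzero shape operators, of equal norm, $\sigma_{\alpha\beta}$ diagonal, and each $A_\alpha$ of rank $2$ in a common $2$-plane. Such a configuration is intrinsically "two-dimensional", and for $n\geqslant3$ it is incompatible with the Codazzi equations and the Gauss equation unless $\nabla h$ is large.

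To make this quantitative I would apply the Gauss equation to the Laplacian of $h$ a second time, or equivalently use the Bochner formula for $|\nabla h|^2$, together with an improved Kato inequality for Codazzi tensors, $|\nabla h|^2\geqslant \frac{n+2}{n}|\nabla|h||^2$. Combining the two integral inequalities for $\int|\nabla h|^2$ and $\int|\nabla^2 h|^2$ with the pointwise estimate
\[
D\geqslant c\,(n-2)\,S\cdot(\text{deviation from extremal}),
\]
I would conclude that either $D\geqslant c'(n-2)S$ somewhere substantial, or $|\nabla h|^2\geqslant c''(n-2)S$. In both cases, integrating the first identity gives
\[
0\geqslant \int_M S\Bigl(c(n-2)-\tfrac32\delta\Bigr),
\]
which is a contradiction when $\delta<\frac{2c(n-2)}{3}$. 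The factor $n-2$ comes naturally from the $n-2$ tangent directions orthogonal to the extremal $2$-plane. In those directions the Gauss equation forces sectional curvatures near $1$, while the extremal $2$-plane has curvature that the curvature identity $\sum K=\frac{n(n-1)-S}{2}$ cannot accommodate.

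The main obstacle is the pointwise refined algebraic inequality. It must give the sharp Li--Li bound plus an explicit correction that is linear in $n-2$ and controls how far $h$ is from the extremal configuration, with constants good enough to produce $182$. I would first try a direct diagonalization: normalize $\sigma_{\alpha\beta}$ to be diagonal, order its eigenvalues, and reduce to the two largest normal directions. The remaining step is to estimate the commutator term using the eigenvalue decomposition of $A_1$ and $A_2$, tracking the mass of $A_1$ and $A_2$ outside their top $2\times2$ block. The final constant would then be the outcome of optimizing a few parameters in Cauchy--Schwarz splittings.
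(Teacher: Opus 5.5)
There is a genuine gap. Your opening step matches the paper: the integrated Simons identity with Li--Li gives $\int_M|\nabla h|^2\leqslant\frac32\delta\int_M S$. Your identification of the near-extremal configurations with the Pauli orbit also matches, and a Bochner inequality for $|\nabla h|^2$ is indeed used there. But the two steps meant to produce the factor $n-2$ fail.

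First, a pointwise algebraic inequality with a correction linear in $n-2$ cannot exist. $D$ is an $O(n)\times O(q)$-invariant quartic that vanishes on the whole Pauli orbit $E_S$ in every dimension $n\geqslant2$. Near the orbit, only the dimension-free stability bound $\dist^2(A,E_S)\leqslant C(\theta)D/S$ of Lemma~\ref{lem:distS} holds. For instance, adding $\epsilon E$ to $A_1$, with $E$ trace-free in the lower $(n-2)\times(n-2)$ block, gives $D/(S\dist^2)\to4$ as $\epsilon\to0$, independently of $n$.

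Second, your geometric heuristic is false. At a Pauli point with $S=2n/3$, the Gauss equation gives $K(e_1,e_2)=1-\frac n3$ and $K=1$ on all other coordinate planes. The identity $\sum_{i\neq j}K_{ij}=n(n-1)-S$ then holds automatically, since it is a consequence of Gauss. Codazzi only says that $h_{ijk}$ is totally symmetric, so $\nabla h(p)=0$ is admissible at any single point. Hence the pointwise dichotomy ``$D\geqslant c'(n-2)S$ or $|\nabla h|^2\geqslant c''(n-2)S$'' is not available, and the improved Kato inequality cannot supply it.

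The missing idea is the Ricci identity for the \emph{second} covariant derivative. The skew part $h_{ijkl}-h_{ijlk}$ is an explicit algebraic tensor $\Phi(h)$: a linear term from the ambient curvature plus cubic terms from the Gauss and normal curvatures. Hence $|\nabla^2h|^2\geqslant\frac14|\Phi(h)|^2$ pointwise, and on $E_{2n/3}$ one computes $|\Phi(h_0)|^2=\frac43n^2(n-2)$. This is where $n-2$ actually enters. The plane $e_1\wedge e_2$ has curvature $1-\frac n3$, which balances the normal curvature only for $n=2$ (the Veronese surface). The mixed planes $e_i\wedge e_a$, $i\leqslant2<a$, carry curvature $1$ acting on $h$. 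So the obstruction bounds $\nabla^2h$ from below, not $\nabla h$.

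The paper then proceeds as follows.
\begin{enumerate}
\item It combines the integrated Simons identity with Lei's inequality $\int_M|\nabla^2h|^2\leqslant\int_M(8S-2n-3)|\nabla h|^2$ to get $\int_M(tnD+|\nabla^2h|^2)\leqslant3tng\int_{\{S\geqslant2n/3\}}S$.
\item It selects a point $p$ where $D$ and $|\nabla^2h|^2$ are simultaneously small.
\item It uses the stability lemma to place $h(p)$ within $O(\sqrt{n-2})$ of $E_{2n/3}$.
\item It bounds $|\Phi(h(p))-\Phi(h_0)|$ via the exact spectrum of $D\Phi_{h_0}$ on the normal space of the orbit, together with the trilinear bound $|\mathcal T(u,v,w)|\leqslant\frac3{\sqrt2}|u||v||w|$.
\end{enumerate}
The constant $182$ comes from balancing $|\Phi(h(p))|$, which stays near $\frac{2}{\sqrt3}n\sqrt{n-2}$, against $4|\nabla^2h|^2(p)\leqslant12tngS(p)$. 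It does not come from optimizing Cauchy--Schwarz splittings in a diagonalized algebraic inequality.
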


No constancy assumption on $S$ and no flatness assumption on the normal bundle are imposed in Theorem~\ref{thm:main}. Theorem \ref{thm:main} is an improvement of pinching estimates of Lei \cite{Lei} and Li--Zhao \cite{LiZhao}. 

We note that a recent preprint \cite{FiresterTsiamis} provides counterexamples to the classical Chern conjecture in high codimensions for broad ranges of dimensions and codimensions. This development does not, however, affect the first-gap problem considered here, since all the counterexamples constructed therein satisfy $S>n$. On the other hand, an $n$-dimensional Clifford minimal hypersurface in $\mathbb{S}^{n+1}$, viewed via a totally geodesic inclusion as a minimal submanifold of $\mathbb{S}^{n+q}$ for any $q\geqslant 2$, satisfies $S\equiv n$. It is therefore natural to conjecture that every closed non-totally geodesic minimal submanifold
$M^n\subset\mathbb{S}^{n+q}$ with $n\geqslant 3$ and $q\geqslant 2$ satisfies the sharp estimate
\[
    S_{\max}\geqslant \frac{2n}{3}+\frac{n}{3}=n.
\]

The proof of Theorem \ref{thm:main} is based on the Bochner estimate of Lei \cite{Lei} and on the curvature-gap machinery of Li--Zhao \cite{LiZhao}.  The new ingredients are a combined weighted selection of the defect and Hessian terms, and a sharp algebraic estimate for the cubic part of the Ricci-commutation tensor.
\medskip

\section{Preliminaries}

Throughout the paper, all matrix and tensor norms are Frobenius norms. Matrix inner products are $\langle A,B\rangle=\operatorname{tr}(A^TB)$, and tensor norms sum over all ordered indices. We write $[A,B]=AB-BA$. When wedge notation is used below, $u\wedge v$ denotes the skew-symmetric matrix $uv^T-vu^T$.

Let $h$ be the second fundamental form of the minimal submanifold
$M^n$ immersed in the unit sphere $\mathbb{S}^{n+q}$. 
Choose local orthonormal tangent and normal frames. Write $h=(h^\alpha_{ij})$ and $A_\alpha=(h^\alpha_{ij})$ for the corresponding shape operators. For any tuple $A=(A_1,\ldots,A_q)$ of trace-free symmetric matrices, set $S(A)=\sum_\alpha|A_\alpha|^2$. We omit the argument when the tuple is fixed. For the second fundamental form, $S=|h|^2$.

The Simons identity is 
\begin{equation}\label{eq:simons}
\frac12\Delta S
=|\nabla h|^2+nS-\sum_{\alpha,\beta}\langle A_\alpha,A_\beta\rangle^2
-\sum_{\alpha,\beta}|[A_\alpha,A_\beta]|^2 .
\end{equation}
The Li--Li matrix inequality (see \cite{LiLi}) is
\[
\sum_{\alpha,\beta}\langle A_\alpha,A_\beta\rangle^2
+\sum_{\alpha,\beta}|[A_\alpha,A_\beta]|^2
\leqslant \frac32 S^2 .
\]
Set
\[
D=3S^2-2\sum_{\alpha,\beta}\langle A_\alpha,A_\beta\rangle^2
-2\sum_{\alpha,\beta}|[A_\alpha,A_\beta]|^2 .
\]
Then $D\geqslant 0$ by the Li--Li inequality.

Integrating \eqref{eq:simons} over the closed manifold $M$ and using
$\int_M\Delta S=0$, one has
\begin{equation}\label{eq:intid}
\int_M|\nabla h|^2+\frac12\int_M D
=\frac32\int_M S\Big(S-\frac{2n}{3}\Big).
\end{equation}
Set
\[
P=\int_{\{S\geqslant 2n/3\}}S\Big(S-\frac{2n}{3}\Big),
\quad
N=\int_{\{S<2n/3\}}S\Big(\frac{2n}{3}-S\Big).
\]
Then $P\geqslant  N\geqslant 0$, and from \eqref{eq:intid} we obtain
\begin{equation*}
\int_M|\nabla h|^2\leqslant\frac32 P,
\quad
\int_M D\leqslant 3P.
\end{equation*}

We need the following Bochner-type estimate which is due to Lei \cite{Lei}. Only its integral
consequence is used in the proof. 
\begin{theorem}[Lei \cite{Lei}]
Let $M^n$ be a closed minimal submanifold in the unit sphere $\mathbb{S}^{n+q}$,
where $n\geqslant 3$ and $q\geqslant 2$. Then
\[
\frac12\Delta|\nabla h|^2
\geqslant  |\nabla^2 h|^2+(2n+3-8S)|\nabla h|^2 .
\]
In particular,
\begin{equation}\label{eq:bochner}
\int_M|\nabla^2 h|^2\leqslant\int_M(8S-2n-3)|\nabla h|^2 .
\end{equation}
\end{theorem}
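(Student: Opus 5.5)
The plan is the standard Bochner computation for the totally symmetric tensor $\nabla h=(h^\alpha_{ijk})$, followed by a pointwise algebraic bound of the curvature terms.

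\emph{Setting up.} By the Codazzi equation in the space form $\mathbb{S}^{n+q}$, $h^\alpha_{ijk}$ is symmetric in $i,j,k$. Write
\[
\frac12\Delta|\nabla h|^2=|\nabla^2 h|^2+\sum h^\alpha_{ijk}\,\Delta h^\alpha_{ijk},
\qquad \Delta h^\alpha_{ijk}=\sum_l h^\alpha_{ijkll}.
\]
The key step is to rewrite $\Delta h^\alpha_{ijk}$ using the Ricci identities. Commuting the indices $k$ and $l$ through $h^\alpha_{ijlk}$ and $h^\alpha_{ijlkl}$ gives
\[
\Delta h^\alpha_{ijk}=(\Delta h^\alpha_{ij})_k+\text{(terms $R\ast\nabla h$)}+\text{(terms $\nabla R\ast h$)}.
\]
Here $R$ stands for both the tangent curvature $R_{ijkl}$ and the normal curvature $R_{\alpha\beta kl}$.

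\emph{Substituting Gauss, Ricci and Simons.} Next I substitute
\[
R_{ijkl}=\delta_{ik}\delta_{jl}-\delta_{il}\delta_{jk}+\sum_\beta\big(h^\beta_{ik}h^\beta_{jl}-h^\beta_{il}h^\beta_{jk}\big),
\qquad
R_{\alpha\beta kl}=\big([A_\alpha,A_\beta]\big)_{kl}.
\]
I also use Simons' pointwise formula
\[
\Delta h^\alpha_{ij}=n h^\alpha_{ij}+\sum_\beta\big(\text{cubic in }A\big),
\]
whose contraction with $h$ gives \eqref{eq:simons}. After differentiation, the contribution $\sum h^\alpha_{ijk}\Delta h^\alpha_{ijk}$ splits into two parts:
\begin{enumerate}[(i)]
\item a linear part coming from the unit-sphere curvature terms and from $n h^\alpha_{ij}$;
\item a part consisting of quartic expressions of the form $\nabla h\ast\nabla h\ast h\ast h$, arising from differentiating the cubic Simons terms and from the Gauss/Ricci curvature terms multiplying $\nabla h$.
\end{enumerate}
Collecting the constant-curvature contributions, I expect the linear part to equal exactly $(2n+3)|\nabla h|^2$. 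The $n$ comes from the Simons term; a further $n$ and the $3$ come from the three commutations over the indices $i,j,k$, using trace-freeness of each $A_\alpha$. This bookkeeping is routine but must be done carefully.

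\emph{Main obstacle: the quartic terms.} The remaining step, which I expect to be the hardest, is to prove pointwise that
\[
\Big|\sum\nabla h\ast\nabla h\ast h\ast h\Big|\leqslant 8S|\nabla h|^2 .
\]
I would organize the quartic terms by type:
\begin{itemize}
\item $\langle A_\beta,A_\gamma\rangle$-type, bounded by $S|\nabla h|^2$ via Cauchy--Schwarz;
\item matrix-product terms such as $\operatorname{tr}\big(\nabla_k A_\alpha\,\nabla_k A_\beta\,A_\alpha A_\beta\big)$;
\item commutator terms involving $[A_\alpha,A_\beta]$ or $[\nabla_kA_\alpha,A_\beta]$.
\end{itemize}
For the commutator terms I would first try the Böttcher--Wenzel inequality $|[X,Y]|^2\leqslant 2|X|^2|Y|^2$, together with the Li--Li type argument that diagonalizes the relevant $q\times q$ Gram matrix. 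The aim is to bound each type by an explicit multiple of $S|\nabla h|^2$ such that the coefficients sum to at most $8$. If the naive count overshoots, I would exploit the symmetry of $h^\alpha_{ijk}$ to pair terms, for example by symmetrizing over $(i,j,k)$ before estimating.

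\emph{Integral consequence.} Combining the linear identity with the quartic bound gives the pointwise inequality of the theorem. For the integral form \eqref{eq:bochner}, integrate over the closed manifold $M$ and use $\int_M\Delta|\nabla h|^2=0$:
\[
0\geqslant\int_M|\nabla^2 h|^2+\int_M(2n+3-8S)|\nabla h|^2,
\]
which rearranges to \eqref{eq:bochner}.
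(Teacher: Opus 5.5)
Your final step is correct, and it is all the paper actually uses. The paper does not prove the pointwise inequality; it quotes it from Lei, and the integrated form follows from $\int_M\Delta|\nabla h|^2=0$ exactly as you write. Your count of the linear part is also right: $n$ comes from $\Delta h^\alpha_{ij}=nh^\alpha_{ij}+\cdots$, $n+1$ from commuting the third covariant derivatives, and $2$ from differentiating the second-order Ricci identity.

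The genuine gap is the quartic estimate. It is the entire content of Lei's theorem, and you only state it as an aim. You never write the quartic terms down, and the term-by-term strategy you describe does not yield the constant $8$.

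Consider just the cubic part $\sum_\beta[A_\beta,[A_\alpha,A_\beta]]-\sum_\beta\langle A_\alpha,A_\beta\rangle A_\beta$ of $\Delta A_\alpha$. Differentiating it and pairing with $\nabla_kA_\alpha$ already produces six quartic terms. Bound them separately as follows:
\begin{itemize}
\item Cauchy--Schwarz for the three Gram-type terms, cost $S|\nabla h|^2$ each;
\item B\"ottcher--Wenzel for $\sum|[A_\beta,\nabla_kA_\alpha]|^2$ and for $\sum\operatorname{tr}([\nabla_kA_\alpha,A_\beta][A_\alpha,\nabla_kA_\beta])$, cost $2S|\nabla h|^2$ each;
\item even the sharp DDVV inequality $\sum_{\alpha,\beta}|[X_\alpha,X_\beta]|^2\leqslant(\sum_\alpha|X_\alpha|^2)^2$ for $\sum\operatorname{tr}([\nabla_kA_\alpha,\nabla_kA_\beta][A_\alpha,A_\beta])$, cost $S|\nabla h|^2$.
\end{itemize}
Together these already cost $8S|\nabla h|^2$. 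The Gauss- and normal-curvature terms from the two Ricci commutations then come on top. These include $-\sum h^\alpha_{ijk}h^\alpha_{ijm}\bigl(\sum_\gamma A_\gamma^2\bigr)_{mk}$ and the $h\ast\nabla G$ and $h\ast\nabla R^\perp$ terms.

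So the separate worst cases cannot be attained simultaneously, and a proof must estimate the terms jointly. That means using the full symmetry and trace-freeness of $h^\alpha_{ijk}$ to produce cancellations among the differentiated Simons terms and the commutation terms. Your fallback, ``exploit the symmetry to pair terms,'' is exactly this missing argument, not a minor refinement. Also, only the one-sided bound (quartic part $\geqslant-8S|\nabla h|^2$) is needed; aiming for an absolute-value bound discards sign information you may need.

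As written, your argument only gives $\frac12\Delta|\nabla h|^2\geqslant|\nabla^2h|^2+(2n+3-CS)|\nabla h|^2$ for some unspecified larger $C$. That is not enough here: Corollary~\ref{cor:selectK} uses the coefficient $8$ exactly, through $8\Lambda-2n-3-2tn\leqslant-3-16/K<0$.
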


\medskip

\section{A key proposition}
The following proposition is the key to improving the pinching increment above $2n/3$ from a bounded quantity to a quantity growing linearly with $n$.

\begin{proposition}\label{prop:select}
Let $g:=S_{\max}-\frac{2n}{3}>0$, let $t>0$, and let $\Lambda\geqslant  S_{\max}$.
Then there exists $p\in M$ with
\[
\frac{2n}{3}\leqslant S(p)\leqslant\frac{2n}{3}+g
\]
such that
\begin{equation}\label{eq:selectcoupled}
 tnD(p)+|\nabla^2h|^2(p)\leqslant n\Cbar gS(p).
\end{equation}
In particular,
\begin{equation*}
D(p)\leqslant\frac{\Cbar}{t}  g  S(p),
\quad
|\nabla^2 h|^2(p)\leqslant\Cbar  n  g  S(p),
\end{equation*}
where
\[
\Cbar=\max\left\{3t,\frac{3(8\Lambda-2n-3)}{2n}\right\}.
\]
\end{proposition}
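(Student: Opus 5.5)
We argue by contradiction via averaging. Set $U=\{p\in M:\ S(p)\geqslant 2n/3\}$. Since $S\leqslant S_{\max}=\frac{2n}{3}+g$ everywhere, every point of $U$ automatically satisfies $\frac{2n}{3}\leqslant S(p)\leqslant \frac{2n}{3}+g$. Moreover $P>0$: if $P=0$, then $N\leqslant P$ gives $N=0$, so $S\in\{0,2n/3\}$ pointwise, and by continuity and connectedness $S$ is constant. The case $S\equiv 0$ is excluded since $g>0$ forces $S_{\max}>2n/3$, and $S\equiv 2n/3$ contradicts $S_{\max}>2n/3$ as well. Hence $U$ has positive measure. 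Suppose, for contradiction, that
\[
tnD+|\nabla^2h|^2> n\Cbar gS \quad\text{at every point of } U .
\]
We will show that integrating this over $U$ contradicts the integral bounds from Section~2.

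\emph{Lower bound.} On $U$ we have $S-\frac{2n}{3}\leqslant g$, hence $gS\geqslant S(S-\frac{2n}{3})$. Since $U$ has positive measure and the inequality is strict pointwise, integration gives
\[
\int_U \bigl(tnD+|\nabla^2h|^2\bigr)> n\Cbar\int_U gS\geqslant n\Cbar P .
\]

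\emph{Upper bound.} Both integrands are nonnegative, so the integral over $U$ is at most the integral over $M$. For the first term, $\int_M D\leqslant 3P$ gives $tn\int_M D\leqslant 3tnP$. For the second term, apply \eqref{eq:bochner} together with $S\leqslant\Lambda$ and $|\nabla h|^2\geqslant0$:
\[
\int_M|\nabla^2h|^2\leqslant (8\Lambda-2n-3)\int_M|\nabla h|^2\leqslant \tfrac32(8\Lambda-2n-3)P .
\]
If $8\Lambda-2n-3<0$, the middle bound still holds because its right-hand side is $\leqslant 0$, which forces $\nabla^2h\equiv0$; the final bound remains true as well. Adding the two pieces,
\[
\int_U\bigl(tnD+|\nabla^2h|^2\bigr)\leqslant nP\Bigl(3t+\tfrac{3(8\Lambda-2n-3)}{2n}\Bigr).
\]

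\emph{The obstacle.} Comparing the two bounds, the sum $3t+\frac{3(8\Lambda-2n-3)}{2n}$ appears, whereas $\Cbar$ is defined as the \emph{maximum} of these two quantities, which can be up to half the sum. This is the step I expect to be delicate. One route is to use the sharper combined identity \eqref{eq:intid}, namely
\[
\int_M|\nabla h|^2+\tfrac12\int_M D\leqslant \tfrac32 P,
\]
rather than bounding the two terms separately. Writing $a=3t$ and $b=\frac{3(8\Lambda-2n-3)}{2n}$, we get
\[
tn\int_M D+\int_M|\nabla^2h|^2\leqslant tn\int_M D+\tfrac{2nb}{3}\int_M|\nabla h|^2 = \tfrac{2n}{3}\Bigl(\tfrac{a}{2}\int_M D+b\int_M|\nabla h|^2\Bigr).
\]
Since $\frac{a}{2}\leqslant \frac{\Cbar}{2}$ and $b\leqslant\Cbar$, the bracket is at most $\Cbar\bigl(\frac12\int_M D+\int_M|\nabla h|^2\bigr)\leqslant\frac32\Cbar P$. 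Hence the total is at most $n\Cbar P$, which contradicts the strict lower bound.

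Thus some $p\in U$ satisfies \eqref{eq:selectcoupled}. The two "in particular" bounds follow from \eqref{eq:selectcoupled} by dropping the other nonnegative term and dividing by $tn$ in the first case.
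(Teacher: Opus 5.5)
Your proof is correct and follows the paper's argument: you combine \eqref{eq:intid} with \eqref{eq:bochner} and $S\leqslant\Lambda$ to get $\int_M(tnD+|\nabla^2h|^2)\leqslant n\Cbar P$, then use $P\leqslant g\int_{\{S\geqslant 2n/3\}}S$ and average over $\{S\geqslant 2n/3\}$, where your coefficientwise bound $\tfrac a2\int_M D+b\int_M|\nabla h|^2\leqslant\Cbar\bigl(\tfrac12\int_M D+\int_M|\nabla h|^2\bigr)$ replaces the paper's case split on the sign of $8\Lambda-2n-3-2tn$. One aside is wrong: if $8\Lambda-2n-3<0$, the separate bound $\int_M|\nabla^2h|^2\leqslant\tfrac32(8\Lambda-2n-3)P$ would fail, since its right-hand side is negative when $P>0$; but this case cannot occur because $\Lambda\geqslant S_{\max}>2n/3$ gives $8\Lambda-2n-3>0$, and your final argument does not use that bound.
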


\begin{proof}
From \eqref{eq:intid},
\[
 \int_M D=3\int_M S\left(S-\frac{2n}{3}\right)
 -2\int_M|\nabla h|^2.
\]
Combining this identity with \eqref{eq:bochner}, we get
\begin{align*}
  \int_M\bigl(tnD+|\nabla^2h|^2\bigr)
  &\leqslant3tn(P-N)
  +\int_M(8S-2n-3-2tn)|\nabla h|^2\\
  &\leqslant3tnP
 +(8\Lambda-2n-3-2tn)\int_M|\nabla h|^2.
\end{align*}
If the last coefficient is nonpositive, the right-hand side is at most
$3tnP$.  If it is positive, since
$\int_M|\nabla h|^2\leqslant3P/2$, the resulting coefficient of $P$ is
$3(8\Lambda-2n-3)/2$.  Thus,
\[
 \int_M\bigl(tnD+|\nabla^2h|^2\bigr)\leqslant n\Cbar P.
\]
Since $P\leqslant g\int_{\{S\geqslant 2n/3\}}S$ and the integrand on the left is nonnegative,
we get
\[
\int_{\{S\geqslant2n/3\}}\big(tnD+|\nabla^2h|^2\big)
\leqslant\int_M\big(tnD+|\nabla^2h|^2\big)
\leqslant n\Cbar  g\int_{\{S\geqslant 2n/3\}}S .
\]
The set
$\{S\geqslant 2n/3\}$ has positive measure since $S_{\max}-\frac{2n}{3}>0$. So, there is $p\in\{S\geqslant 2n/3\}$ such that
\[
tnD(p)+|\nabla^2h|^2(p)\leqslant n\Cbar g S(p).
\]
This is \eqref{eq:selectcoupled}, and $S(p)\geqslant2n/3$. In particular,
\[
D(p)\leqslant\frac{\Cbar}{t}gS(p),\quad
|\nabla^2h|^2(p)\leqslant\Cbar n g S(p),
\]
while $S(p)\leqslant S_{\max}=\frac{2n}{3}+g$.
\end{proof}

\begin{corollary}\label{cor:selectK}
Assume
\[
 0<g\leqslant\frac{n-2}{K}
\]
for $K>0$, take $\Lambda=S_{\max}$ and
\[
 t=\frac53+\frac4K.
\]
Then the point $p$ in Proposition~\ref{prop:select} may be chosen so that
\begin{equation}\label{eq:selectK}
 tnD(p)+|\nabla^2h|^2(p)\leqslant3tngS(p).
\end{equation}
In particular,
\begin{equation*}
 D(p)\leqslant3gS(p),
 \quad
 |\nabla^2h|^2(p)\leqslant\left(5+\frac{12}{K}\right)ngS(p).
\end{equation*}
\end{corollary}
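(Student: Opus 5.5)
The plan is to apply Proposition~\ref{prop:select} with $\Lambda=S_{\max}$ and $t=\frac53+\frac4K$, and then show that under the hypothesis $0<g\leqslant\frac{n-2}{K}$ the maximum defining $\Cbar$ is attained by its first entry, i.e.\ $\Cbar=3t$. Once this is established, \eqref{eq:selectcoupled} becomes exactly \eqref{eq:selectK}. The two displayed consequences then follow by discarding one of the two nonnegative terms on the left. Dropping $|\nabla^2h|^2(p)\geqslant 0$ gives $tnD(p)\leqslant 3tngS(p)$, hence $D(p)\leqslant 3gS(p)$. Dropping $tnD(p)\geqslant0$, which holds by the Li--Li inequality, gives $|\nabla^2h|^2(p)\leqslant 3tngS(p)=(5+\frac{12}{K})ngS(p)$.

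The only real step is the comparison
\[
\frac{3(8\Lambda-2n-3)}{2n}\leqslant 3t
\quad\Longleftrightarrow\quad
8\Lambda-2n-3\leqslant 2nt=\frac{10n}{3}+\frac{8n}{K}.
\]
Substitute $\Lambda=S_{\max}=\frac{2n}{3}+g$, so that $8\Lambda-2n-3=\frac{16n}{3}+8g-2n-3=\frac{10n}{3}+8g-3$. The inequality therefore reduces to
\[
8g-3\leqslant\frac{8n}{K}.
\]
Using $g\leqslant\frac{n-2}{K}$, we get $8g\leqslant\frac{8n-16}{K}<\frac{8n}{K}+3$, so the inequality holds with room to spare. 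The choice of $t$ was made precisely so that the term $\frac{10n}{3}$ cancels and the $g$-contribution is absorbed by the $\frac{4}{K}$ correction.

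No serious obstacle is expected. The only point needing care is verifying that $\Lambda=S_{\max}$ is admissible in Proposition~\ref{prop:select}, which is immediate since the proposition only requires $\Lambda\geqslant S_{\max}$. One also checks that $g>0$, so the proposition applies, and that $t>0$, which holds because $K>0$. The point $p$ produced by the proposition already satisfies $\frac{2n}{3}\leqslant S(p)\leqslant\frac{2n}{3}+g$, so nothing further about its location is needed.
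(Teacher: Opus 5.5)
Your proposal is correct and matches the paper's argument. The paper likewise substitutes $\Lambda=S_{\max}=\frac{2n}{3}+g$ and $g\leqslant\frac{n-2}{K}$ to obtain $8\Lambda-2n-3-2tn\leqslant-3-\frac{16}{K}<0$, which is your comparison $\frac{3(8\Lambda-2n-3)}{2n}\leqslant 3t$; hence $\Cbar=3t=5+\frac{12}{K}$, and the coupled selection inequality becomes \eqref{eq:selectK}, from which the two consequences follow by dropping one nonnegative term exactly as you describe.
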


\begin{proof}
We have
\[
 \begin{split}
 8\Lambda-2n-3-2tn
 &\leqslant\frac{10n}{3}+\frac{8(n-2)}K-3
 -2n\left(\frac53+\frac4K\right)\\
 &=-3-\frac{16}{K}<0.
 \end{split}
\]
Hence $\Cbar=3t=5+12/K$, and 
\eqref{eq:selectcoupled} becomes \eqref{eq:selectK}.
\end{proof}

\medskip

\section{Algebraic stability}

We adopt the same notation as in \cite{LiZhao}. Let $O(n)\times O(q)$ act on $q$-tuples of trace-free symmetric matrices by
\[
(P,Q)\cdot(A_1,\ldots,A_q)
=\left(\sum_\beta Q_{1\beta}PA_\beta P^T,\ldots,
\sum_\beta Q_{q\beta}PA_\beta P^T\right).
\]
For $\rho>0$, let $E_\rho$ be the orbit of
\[
\left(\frac{\sqrt\rho}{2}
\begin{pmatrix}1&0\\0&-1\end{pmatrix}\oplus0,
\frac{\sqrt\rho}{2}
\begin{pmatrix}0&1\\1&0\end{pmatrix}\oplus0,
0,\ldots,0\right).
\]
These compact orbits are the Pauli equality orbits of total squared norm $\rho$.

Let $0<\theta<1/4$ and put
\begin{equation*}
 C(\theta):=
 \frac{c_1}{\sqrt{2m_1}}+\frac{c_2}{\sqrt{2m_2}}
 +\frac1{1-s}+1+\frac{\theta}{(1-s)^2},
\end{equation*}
where
\begin{align*}
 s&=\sqrt\theta,
 &\tau&=\frac{\theta}{1-s},
 &m_1&=\frac{1-\tau}{2},
 &m_2&=\frac{1-s-\tau}{2},\\
 w&=\frac12\left(m_2-\frac{3\theta}{8m_1}\right),
 &\eta&=\frac3{8w},
 &a&=\frac{\eta\theta}{m_1},
 &\gamma&=1-\sqrt{a(2-a)},\\
 Q&=\frac3{4m_1}\left(\frac12+\frac1\gamma\right),
 &u&=\frac{Q\theta}{m_2},
 &c_1&=\frac{\eta}{1+\sqrt{1-a/2}},
 &c_2&=\frac{2Q}{1+\sqrt{1-u}}.
\end{align*}
Assume all denominators are nonzero and the radicands are all non-negative.

The following lemma gives a quantitative distance estimate to the Pauli orbit $E_S$.

\begin{lemma}
\label{lem:distS}
Let $A=(A_1,\dots,A_q)$ be a tuple of trace-free symmetric matrices and set
$S=\sum_\alpha|A_\alpha|^2>0$.  For $0<\theta<1/4$, assume that $w>0$, $0<a<1$, and $0<u<1$.
If $D\leqslant\theta S^2$, then
\[
\dist^2(A,E_S)\leqslant\frac{C(\theta)}{S}D .
\]
\end{lemma}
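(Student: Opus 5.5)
We prove Lemma~\ref{lem:distS} by reducing to a normal form and splitting $D$ into a normal-Gram defect and a tangential defect, each controlling one ingredient of the Pauli configuration. By $O(n)\times O(q)$-invariance of $D$, $S$ and $\dist(\cdot,E_S)$, we may assume the normal frame diagonalizes the Gram matrix $G_{\alpha\beta}=\langle A_\alpha,A_\beta\rangle$, with eigenvalues $\lambda_1\geqslant\lambda_2\geqslant\cdots\geqslant0$ and $\sum\lambda_\alpha=S$. Then
\[
D=3S^2-2\sum_\alpha\lambda_\alpha^2-2\sum_{\alpha,\beta}|[A_\alpha,A_\beta]|^2 .
\]
We use the sharp single-pair estimate $|[A,B]|^2\leqslant 2|A|^2|B|^2$, with equality iff $A,B$ are supported on a common $2$-plane as trace-free orthogonal Pauli-type matrices. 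We then rewrite $D$ as a sum of nonnegative defects:
\begin{itemize}
\item a ``Gram defect'' measuring how far $(\lambda_\alpha)$ is from $(S/2,S/2,0,\dots)$;
\item defects $2|A_1|^2|A_2|^2-|[A_1,A_2]|^2$ and analogous cross terms with $\alpha\geqslant3$.
\end{itemize}
With $D\leqslant\theta S^2$, the Gram part forces $\lambda_3+\cdots\leqslant\tau S$ and $|\lambda_1-\lambda_2|\leqslant sS$ (here $s=\sqrt\theta$ and $\tau=\theta/(1-s)$ enter). This gives the lower bounds $\lambda_1\geqslant m_1S$ and $\lambda_2\geqslant m_2 S$.

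\textbf{Step 1: the tail.} Using the Gram defect, we bound $\sum_{\alpha\geqslant3}|A_\alpha|^2$ linearly by $D/S$. The cross terms $|[A_1,A_\alpha]|^2$ must be absorbed: we trade $2\lambda_1\lambda_\alpha$ against $|[A_1,A_\alpha]|^2$, which is where $w=\frac12(m_2-3\theta/(8m_1))>0$ arises. This yields $\sum_{\alpha\geqslant3}\lambda_\alpha\leqslant\eta D/S$ with $\eta=3/(8w)$.

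\textbf{Step 2: $A_1$ near rank two.} A Li--Li type eigenvalue computation shows that the defect $2|A_1|^2|A_2|^2-|[A_1,A_2]|^2$ controls the distance of $A_1/|A_1|$ from the set of matrices $\frac1{\sqrt2}(e\,e^T-f\,f^T)$. Write $a=\eta\theta/m_1$ for the normalized deficit. Solving the resulting quadratic relation gives the closest-point distance factor $c_1=\eta/(1+\sqrt{1-a/2})$, after rationalizing $1-\sqrt{1-a/2}$. The quantity $\gamma=1-\sqrt{a(2-a)}$ is a lower bound on a cosine that keeps the $2$-plane well defined.

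\textbf{Step 3: $A_2$ aligned with $A_1$.} Once $A_1$ is near a Pauli matrix on the plane spanned by $e,f$, the commutator defect forces $A_2$ to lie near the complementary Pauli direction on that same plane. The constant $Q$, which involves $1/\gamma$, controls this, and rationalizing again gives $c_2=2Q/(1+\sqrt{1-u})$.

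\textbf{Step 4: assembly.} Take the candidate point of $E_S$ built from the plane $(e,f)$, scaled by $\sqrt{S}/2$. Its distance to $A$ is bounded by four contributions:
\begin{itemize}
\item the shape errors $c_i/\sqrt{2m_i}$ after normalizing by $\sqrt{\lambda_i}$;
\item the scaling mismatch $(\sqrt{\lambda_i}-\sqrt{S/2})^2$, bounded via $|\lambda_1-\lambda_2|$ with the $1/(1-s)$ factor;
\item the tail, contributing $1$;
\item a second-order remainder $\theta/(1-s)^2$.
\end{itemize}
Summing these gives $C(\theta)D/S$.

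The main obstacle is Step 2/3: converting the commutator defect into a \emph{linear-in-$D$} distance bound (not $\sqrt D$). This is done by working with squared distances and rationalizing square roots, as the forms of $c_1$ and $c_2$ suggest. We would first verify it in the $2\times2$ block case and then handle the off-block components as orthogonal error terms.
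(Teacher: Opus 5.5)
\paragraph{There is a genuine gap at the first step.}
The splitting of $D$ into a nonnegative ``Gram defect'' plus nonnegative pairwise B\"ottcher--Wenzel defects does not exist. Write $x_\alpha$ for your Gram eigenvalues, $d=x_1-x_2$, $t=\sum_{\alpha\geqslant3}x_\alpha$, and $|[A_\alpha,A_\beta]|^2=2x_\alpha x_\beta-\epsilon_{\alpha\beta}$ with $\epsilon_{\alpha\beta}\geqslant0$. Then exactly
\[
 D=\Big(2\sum_\alpha x_\alpha^2-S^2\Big)+4\sum_{\alpha<\beta}\epsilon_{\alpha\beta},
 \qquad
 2\sum_\alpha x_\alpha^2-S^2=d^2+t^2-2St+2\sum_{\alpha\geqslant3}x_\alpha^2 .
\]
The Gram part is negative as soon as $t>0$ and $d$ is small; for instance, it equals $-S^2/3$ at $(S/3,S/3,S/3)$. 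So it forces nothing about the tail. For the same reason the top-pair defect $\epsilon_{12}$ is not controlled by $D$, because the term $-2St$ must be absorbed first.

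Your proposed absorption, trading $2x_1x_\alpha$ against $|[A_1,A_\alpha]|^2$, is again just the pairwise bound, and it is far from sharp here. For $A_1=c(e_1e_1^T-e_2e_2^T)$ and any $A_3$ orthogonal to both $A_1$ and $A_2=c(e_1e_2^T+e_2e_1^T)$, one has $|[A_1,A_3]|^2\leqslant\frac12|A_1|^2|A_3|^2$.

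What is missing is a genuinely multi-matrix commutator estimate. The paper uses Lu's weighted commutator inequality (as in Li--Zhao) to get $D\geqslant d^2+t(S-d)$. This single inequality gives:
\begin{enumerate}
\item $d\leqslant sS$ and $t\leqslant D/((1-s)S)$;
\item the lower bounds $x_i\geqslant m_iS$;
\item after applying the pairwise bound to all pairs other than $(1,2)$, the estimate $4E\leqslant D+2St-d^2-t^2\leqslant3D$ for $E=2x_1x_2-|[A_1,A_2]|^2$.
\end{enumerate}

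\paragraph{The constants are also assembled inconsistently.}
This looks like a symptom of reading them off the formula for $C(\theta)$. Your Step~1 claims $t\leqslant\eta D/S$, yet Step~4 charges the tail only $1$. In the paper the tail costs $\frac1{1-s}$. The term $1+\frac{\theta}{(1-s)^2}$ is the scaling mismatch, $\sum_i(\sqrt{x_i}-\sqrt{S/2})^2\leqslant(d^2+t^2)/S$.

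Nor do $w$ and $\eta$ concern the tail. In the paper they arise as follows. Diagonalize $A_1=\operatorname{diag}(\lambda_i)$ and write $A_2=(b_{ij})$. Then
\[
E=2x_1\sum_ib_{ii}^2+2\sum_{i<j}\Delta_{ij}b_{ij}^2,
\qquad
\Delta_{ij}=2x_1-(\lambda_i-\lambda_j)^2\geqslant0 .
\]
The first term bounds the diagonal mass of $A_2$, so its off-diagonal mass $W$ is at least $wS$. A weighted average then selects a pair with $\Delta_{12}\leqslant E/(2W)\leqslant\eta D/S$, and the radial identity gives $c_1$. The consequence $\Delta_{ij}\geqslant\gamma x_1$ for every other pair bounds the non-Pauli part of $A_2$ by $QD/S$ and gives $c_2$.

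Your Steps~2--3 name the right targets, and the rationalization you mention is indeed the radial identity. But they contain none of this mechanism.
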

 
\begin{proof}
Following the proof of \cite[Lemma~4.1]{LiZhao}, we obtain a refinement. Rotate the normal frame so
that the Gram matrix is diagonal and write
\[
 x_1\geqslant x_2\geqslant\cdots\geqslant0,
 \quad x_\alpha=|A_\alpha|^2,
 \quad d=x_1-x_2,
 \quad t=\sum_{\alpha\geqslant3}x_\alpha.
\]
Applying Lu's weighted commutator inequality \cite{Lu} as in Li--Zhao, we obtain
\begin{equation}\label{eq:Ddt}
 D\geqslant d^2+t(S-d).
\end{equation}
Consequently,
\begin{equation}\label{eq:dtbounds}
 d\leqslant sS,
 \quad t\leqslant\frac{D}{S-d}
 \leqslant\frac{D}{S(1-s)}\leqslant\tau S,
 \quad x_1\geqslant m_1S,
 \quad x_2\geqslant m_2S.
\end{equation}

Diagonalize $A_1=\operatorname{diag}(\lambda_1,\ldots,\lambda_n)$ and write
$A_2=(b_{ij})$.  Put
\[
 E=2x_1x_2-|[A_1,A_2]|^2\geqslant0.
\]
By the B\"ottcher--Wenzel inequality, one has the estimate
\[
 \sum_{\alpha\geqslant3}x_\alpha^2
 +4\sum_{3\leqslant\alpha<\beta}x_\alpha x_\beta
 =2t^2-\sum_{\alpha\geqslant3}x_\alpha^2\leqslant2t^2.
\]
By the definition of $D$, one has
\[
 4E\leqslant D+2St-d^2-t^2.
\]
By \eqref{eq:Ddt},
\[
 3D-\bigl(D+2St-d^2-t^2\bigr)
 \geqslant3d^2-2dt+t^2=(t-d)^2+2d^2\geqslant0.
\]
Hence,
\begin{equation}\label{eq:Ethreequarters}
 E\leqslant\frac34D.
\end{equation}

For $i<j$, set
\[
 \Delta_{ij}=2x_1-(\lambda_i-\lambda_j)^2\geqslant0.
\]
We have the decomposition
\[
 E=2x_1\sum_i b_{ii}^2+2\sum_{i<j}\Delta_{ij}b_{ij}^2.
\]
By \eqref{eq:dtbounds} and \eqref{eq:Ethreequarters}, one has
\[
 \sum_i b_{ii}^2\leqslant\frac{E}{2x_1}
 \leqslant\frac{3\theta}{8m_1}S.
\]
Thus, with $W=\sum_{i<j}b_{ij}^2$,
\[
 W=\frac12\left(x_2-\sum_i b_{ii}^2\right)
 \geqslant wS.
\]

A weighted-average argument supplies a pair, relabelled $(1,2)$, for which
\begin{equation}\label{eq:Deltabound}
 \Delta:=\Delta_{12}\leqslant\frac{E}{2W}
 \leqslant\eta\frac DS,
 \quad
 \frac{\Delta}{x_1}\leqslant a.
\end{equation}

We use twice the following elementary radial identity.  If $u$ is a unit
vector, $|v|=r>0$, $c=\langle v,u\rangle\geqslant0$, and
$q=r^2-c^2$, then
\begin{equation}\label{eq:radialidentity}
 |v-ru|^2=\frac{2q}{1+\sqrt{1-q/r^2}}.
\end{equation}
Let $P_1,P_2$ be the two Pauli matrices, with signs chosen according to
$\lambda_1-\lambda_2$ and $b_{12}$, respectively.  Applying
\eqref{eq:radialidentity} to $A_1$, where $q=\Delta/2$, we get
\begin{equation}\label{eq:e1sharp}
 \left|A_1-\sqrt{\frac{x_1}{2}}P_1\right|^2
 \leqslant c_1\frac DS.
\end{equation}

To estimate $A_2$, observe that \eqref{eq:Deltabound} implies
\[
 \Delta_{ij}\geqslant\gamma x_1,\quad \{i,j\}\ne\{1,2\}.
\]
In fact, $|\lambda_1|$ and $|\lambda_2|$ are at least
$\frac12(\sqrt{2-a}-\sqrt a)\sqrt{x_1}$, and the identity
\[
 \Delta_{ij}=(\lambda_i+\lambda_j)^2
 +2\sum_{k\ne i,j}\lambda_k^2
\]
proves the claim.  
It follows that the squared norm $q_2$ of the
component of $A_2$ orthogonal to the $(1,2)$ Pauli direction satisfies
\[
 q_2=x_2-2b_{12}^2
 \leqslant Q\frac DS.
\]
Moreover, $q_2/x_2\leqslant u$.  Applying
\eqref{eq:radialidentity} again, we get
\begin{equation}\label{eq:e2sharp}
 \left|A_2-\sqrt{\frac{x_2}{2}}P_2\right|^2
 \leqslant c_2\frac DS.
\end{equation}

Suppose $B\in E_S$ such that $\dist(A,E_S)=\dist(A,B)=|A-B|$.
For $i=1,2$, set $C_i=\sqrt{x_i/2}P_i$ and $B_i=\sqrt{S/4}P_i$. Then $|A_i|=|C_i|$. Therefore,
\begin{equation}\label{eq:exactrescale}
 |A_i-B_i|^2
 \leqslant \sqrt{\frac{S}{2x_i}}\,|A_i-C_i|^2
 +\left(\sqrt{x_i}-\sqrt{\frac S2}\right)^2.
\end{equation}
By \eqref{eq:dtbounds},
\[
 t\leqslant\frac1{1-s}\frac DS.
\]
Therefore,
\[
 \sum_{i=1}^2\left(\sqrt{x_i}-\sqrt{\frac S2}\right)^2
 \leqslant\frac{d^2+t^2}{S}
 \leqslant\left(1+\frac{\theta}{(1-s)^2}\right)\frac DS.
\]
Combining with \eqref{eq:dtbounds}, \eqref{eq:e1sharp}, \eqref{eq:e2sharp}, and
\eqref{eq:exactrescale}, we conclude that
\[
 \dist^2(A,E_S)\leqslant C(\theta)\frac DS.
\]
This proves the lemma.
\end{proof}

We next rescale the comparison tuple to the critical orbit $E_{2n/3}$.

\begin{lemma}
    \label{lem:dist}
Let $A=(A_1,\dots,A_q)$ be a tuple of trace-free symmetric matrices and set
$S=\sum_\alpha|A_\alpha|^2>0$.  Let $\theta$ satisfy the admissibility
conditions of Lemma~\ref{lem:distS}.  If $D\leqslant\theta S^2$ and
$S\geqslant2n/3$, then
\begin{equation}\label{eq:dist}
\dist^2(A,E_{2n/3})
\leqslant\frac{C(\theta)}{S}D
+\Big(\sqrt S-\sqrt{\frac{2n}{3}}\Big)^2 .
\end{equation}
\end{lemma}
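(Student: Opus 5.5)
The plan is to apply Lemma~\ref{lem:distS} to obtain a nearby point of $E_S$, and then radially rescale that point onto $E_{2n/3}$. Rescaling is compatible with the orbit structure: both orbits come from the same base tuple, so $E_\rho=\sqrt{\rho/S}\,E_S$ as sets. This holds because the $O(n)\times O(q)$ action is linear, and the generating tuple of $E_\rho$ is $\sqrt{\rho}$ times a fixed unit-norm tuple.

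\emph{Step 1.} Since $D\leqslant\theta S^2$ and $\theta$ is admissible, Lemma~\ref{lem:distS} gives a point $B\in E_S$ (the orbit is compact, so the distance is attained) with
\[
|A-B|^2\leqslant\frac{C(\theta)}{S}D .
\]

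\emph{Step 2.} Set $B'=\sqrt{2n/(3S)}\,B$. By the scaling remark, $B'\in E_{2n/3}$. Since $|B|=\sqrt S$, the vector $B-B'$ is a multiple of $B$ with
\[
|B-B'|=\sqrt S-\sqrt{\tfrac{2n}{3}}\geqslant0,
\]
using $S\geqslant 2n/3$.

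\emph{Step 3.} We need a Pythagorean inequality instead of the triangle inequality, because the triangle inequality would give $(|A-B|+|B-B'|)^2$, which is weaker than \eqref{eq:dist}. Expand
\[
|A-B'|^2=|A-B|^2+|B-B'|^2+2\langle A-B,\,B-B'\rangle .
\]
The cross term has the sign of $\langle A-B,B\rangle$, since $B-B'=(1-\sqrt{2n/(3S)})B$ with a nonnegative coefficient. Because $|A|=|B|=\sqrt S$, we get
\[
\langle A-B,B\rangle=\langle A,B\rangle-S=-\tfrac12|A-B|^2\leqslant0 .
\]
So the cross term is nonpositive, and therefore
\[
\dist^2(A,E_{2n/3})\leqslant|A-B'|^2\leqslant \frac{C(\theta)}{S}D+\Big(\sqrt S-\sqrt{\tfrac{2n}{3}}\Big)^2,
\]
which is \eqref{eq:dist}.

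The only place needing care is Step 3. The key observation is that $A$ and the minimizer $B$ lie on the same sphere of radius $\sqrt S$, so $A-B$ makes an obtuse angle with the radial direction $B$. This sign is what removes the cross term; everything else is direct substitution.
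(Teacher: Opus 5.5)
Your proposal is correct and follows essentially the same route as the paper's proof. You take the nearest point $B\in E_S$ supplied by Lemma~\ref{lem:distS}, rescale it by $\lambda=\sqrt{2n/(3S)}\leqslant 1$ onto $E_{2n/3}$, and discard the cross term $2(1-\lambda)\langle A-B,B\rangle\leqslant 0$ using $|A|=|B|=\sqrt S$; the only difference is that you obtain this sign from the exact identity $\langle A-B,B\rangle=-\tfrac12|A-B|^2$, whereas the paper uses Cauchy--Schwarz.
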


\begin{proof}
By Lemma~\ref{lem:distS}, there is $B\in E_S$ with
$|A-B|=\dist(A,E_S)$ and
$|A-B|^2\leqslant\frac{C(\theta)}{S}D$. Put
$\lambda=\sqrt{\frac{2n}{3}}/\sqrt S\leqslant1$ and $C=\lambda B$. Then
$C\in E_{2n/3}$ and
\[
|B-C|^2=(1-\lambda)^2|B|^2=\Big(\sqrt S-\sqrt{\frac{2n}{3}}\Big)^2 .
\]
Since $|B|=|A|=\sqrt S$, by the Cauchy--Schwarz inequalty,
\[
\langle A-B,B\rangle=\langle A,B\rangle-S\leqslant0 .
\]
Therefore,
\[
|A-C|^2
=|A-B|^2+|B-C|^2+2(1-\lambda)\langle A-B,B\rangle
\leqslant|A-B|^2+|B-C|^2 .
\]
Consequently,
\[
\dist^2(A,E_{2n/3})\leqslant|A-C|^2
\leqslant\frac{C(\theta)}{S}D
+\Big(\sqrt S-\sqrt{\frac{2n}{3}}\Big)^2 .
\qedhere
\]
\end{proof}

\begin{remark}\label{remark}
$(1)$ The constant $\theta=1/64$ is used in \cite{LiZhao}, and the coefficient of $D/S$ in that estimate is $20$ rather than $C(1/64)$ where
\[
 C(1/64)=6.4756606507\ldots<\frac{13}{2}.
\]
$(2)$ Set  $\theta_*:=\frac9{364}$. On the whole interval $0<\theta\leqslant\theta_*$ one has
\[
 w>0.19380,
 \quad 0<a<0.09858,
 \quad 0<u<0.21595.
\]
So, all admissibility conditions in Lemma~\ref{lem:distS} hold. Moreover, at the endpoint one has
\begin{equation*}
 C(\theta_*)=7.3256369876\ldots<7.326.
\end{equation*}
\end{remark}

\begin{remark}
The hypothesis $S\geqslant 2n/3$ is used exactly to make the cross term
$2(1-\lambda)\langle A-B,B\rangle$ nonpositive. In the application below the
selected point satisfies $S(p)\geqslant 2n/3$, so \eqref{eq:dist} applies.
\end{remark}

\section{The Ricci-commutation obstruction}
As in \cite{LiZhao}, for an algebraic tuple $a=(a^\alpha_{ij})\in(\Sym^2_0(\RR^n))^q$, let $R(a)$
and $R^\perp(a)$ denote the algebraic Gauss and Ricci tensors obtained by
substituting $a$ for $h$:
\[
R_{ijkl}(a)
=\delta_{ik}\delta_{jl}-\delta_{il}\delta_{jk}
+\sum_\alpha\big(a^\alpha_{ik}a^\alpha_{jl}-a^\alpha_{il}a^\alpha_{jk}\big),
\]
\[
R^\perp_{\alpha\beta kl}(a)=[A_\alpha,A_\beta]_{kl},
\quad A_\alpha=(a^\alpha_{ij}).
\]
Define
\[
T_1(u,K)_{\alpha ijkl}=\sum_r u^\alpha_{rj}K_{rikl},
\]
\[
T_2(u,K)_{\alpha ijkl}=\sum_r u^\alpha_{ir}K_{rjkl},
\]
\[
N(u,L)_{\alpha ijkl}=\sum_\beta u^\beta_{ij}L_{\beta\alpha kl}.
\]
The maps $T_1,T_2$ and $N$ are bilinear in their two arguments.

Define the Ricci-commutation tensor by
\[
\Phiv(a)=T_1(a,R(a))+T_2(a,R(a))+N(a,R^\perp(a)).
\]
For an actual immersion, by the Ricci identity for the second covariant differentiation of the second fundamental form $h$, there holds
\begin{equation}\label{eq:ricci}
|\Phiv(h)|^2\leqslant4|\nabla^2 h|^2 .
\end{equation}

Write $R(a)=R^0+G(a)$ with

\[
R^0_{ijkl}=\delta_{ik}\delta_{jl}-\delta_{il}\delta_{jk},\quad
G(a)_{ijkl}=\sum_\gamma\big(a^\gamma_{ik}a^\gamma_{jl}-a^\gamma_{il}a^\gamma_{jk}\big),
\]
and put
\[
\Phiv^0(a)=T_1(a,R^0)+T_2(a,R^0).
\]
Since $R^0$ is fixed, $\Phi^0$ is linear.

We need the following exact computation.

\begin{lemma}\label{lem:flat-ricci-norm}
For every trace-free symmetric tuple $h$, one has the exact identity
\[
|\Phiv^0(h)|^2=4nS .
\]
Consequently, for any trace-free symmetric tuples $h,\tilde h$,
\[
|\Phiv^0(h)-\Phiv^0(\tilde h)|=2\sqrt n\,|h-\tilde h| .
\]
\end{lemma}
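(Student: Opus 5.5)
Compute $\Phiv^0(h)$ explicitly. With $R^0_{rikl}=\delta_{rk}\delta_{il}-\delta_{rl}\delta_{ik}$,
\[
T_1(h,R^0)_{\alpha ijkl}=\sum_r h^\alpha_{rj}(\delta_{rk}\delta_{il}-\delta_{rl}\delta_{ik})
=h^\alpha_{kj}\delta_{il}-h^\alpha_{lj}\delta_{ik},
\]
and similarly
\[
T_2(h,R^0)_{\alpha ijkl}=h^\alpha_{ik}\delta_{jl}-h^\alpha_{il}\delta_{jk}.
\]
So, for each $\alpha$ separately,
\[
\Phiv^0(h)_{\alpha ijkl}=h^\alpha_{jk}\delta_{il}-h^\alpha_{jl}\delta_{ik}+h^\alpha_{ik}\delta_{jl}-h^\alpha_{il}\delta_{jk}.
\]
The normal index plays no role, so it suffices to show that for one trace-free symmetric matrix $A=(a_{ij})$ the four-index tensor
\[
X_{ijkl}=a_{jk}\delta_{il}-a_{jl}\delta_{ik}+a_{ik}\delta_{jl}-a_{il}\delta_{jk}
\]
has $|X|^2=4n|A|^2$. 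Summing over $\alpha$ then gives $4nS$.

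To evaluate $|X|^2$, expand the square of the four-term sum into $4$ diagonal terms and $12$ cross terms, summing over all ordered $i,j,k,l$.

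Each diagonal term equals $n|A|^2$. For example, $\sum a_{jk}^2\delta_{il}^2=n|A|^2$, since the free pair gives a factor $n$.

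The cross terms are of two kinds:
\begin{itemize}
\item Pairs sharing one index in a Kronecker delta with the other term's matrix index give $\pm\sum a_{jk}a_{jl}\delta_{il}\delta_{ik}=\pm|A|^2$. For instance, term 1 with term 2 gives $-\sum_{i,j}a_{ji}a_{ji}=-|A|^2$.
\item Pairs whose deltas pair disjoint indices give $\pm(\tr A)^2=0$. For instance, term 1 with term 4 gives $-\sum a_{jk}a_{il}\delta_{il}\delta_{jk}=-(\tr A)^2$.
\end{itemize}

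The main bookkeeping step is to track the signs of all twelve cross terms. I expect them to combine to $0$, possibly up to a multiple of $(\tr A)^2$, which vanishes by trace-freeness. Then $|X|^2=4n|A|^2$.

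Let me check the pairs with $(\tr A)^2$:
\begin{itemize}
\item (1,4): $-\sum a_{jk}a_{il}\delta_{il}\delta_{jk}=-(\tr A)^2$.
\item (2,3): $-\sum a_{jl}a_{ik}\delta_{ik}\delta_{jl}=-(\tr A)^2$.
\end{itemize}

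The remaining pairs each give $\pm|A|^2$:
\begin{itemize}
\item (1,2): $-|A|^2$.
\item (1,3): $\sum a_{jk}a_{ik}\delta_{il}\delta_{jl}=+|A|^2$.
\item (2,4): $+|A|^2$.
\item (3,4): $-|A|^2$.
\end{itemize}

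These four cancel. Therefore $|X|^2=4n|A|^2-4(\tr A)^2=4n|A|^2$.

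For the consequence, $\Phiv^0$ is linear by construction, so
\[
\Phiv^0(h)-\Phiv^0(\tilde h)=\Phiv^0(h-\tilde h).
\]
The difference $h-\tilde h$ is again a trace-free symmetric tuple. Applying the identity to it gives
\[
|\Phiv^0(h-\tilde h)|^2=4n|h-\tilde h|^2,
\]
and taking square roots yields the stated equality.
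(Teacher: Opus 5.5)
Your proposal is correct and matches the paper's proof: you derive the same explicit formula for $\Phiv^0(h)$, expand the square into four self-products (each $n|A|^2$) and six pairwise cross terms (four $\pm|A|^2$ terms that cancel and two $-(\tr A)^2$ terms killed by trace-freeness), and get the Lipschitz identity from linearity. Your explicit factor of two on the cross terms, giving $4n|A|^2-4(\tr A)^2$, is slightly more careful bookkeeping than the paper's.
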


\begin{proof}
Substituting $R^0_{ijkl}=\delta_{ik}\delta_{jl}-\delta_{il}\delta_{jk}$ into the
definition of $T_1,T_2$, we get
\[
(\Phiv^0(h))^\alpha_{ijkl}
=h^\alpha_{kj}\delta_{il}-h^\alpha_{lj}\delta_{ik}
+h^\alpha_{ik}\delta_{jl}-h^\alpha_{il}\delta_{jk}.
\]

Fix a normal index $\alpha$ and write
\[
(\Phiv^0(h))^\alpha_{ijkl}
=\Psi^1_{ijkl}+\Psi^2_{ijkl}
 +\Psi^3_{ijkl}+\Psi^4_{ijkl},
\]
where
\[
\begin{aligned}
\Psi^1_{ijkl}&= h^\alpha_{kj}\delta_{il},&
\Psi^2_{ijkl}&=-h^\alpha_{lj}\delta_{ik},\\
\Psi^3_{ijkl}&= h^\alpha_{ik}\delta_{jl},&
\Psi^4_{ijkl}&=-h^\alpha_{il}\delta_{jk}.
\end{aligned}
\]
Squaring and summing over
$i,j,k,l$, each of the four self-products equals
\[
\sum_{i,j,k,l}(\Psi^1_{ijkl})^2=n\sum_{j,k}\big(h^{\alpha}_{jk}\big)^2=n\,S_\alpha,
\]
and likewise for $\Psi^2,\Psi^3,\Psi^4$, where $S_\alpha=\sum_{j,k}(h^\alpha_{jk})^2$. Hence the
four squares contribute $4nS_\alpha$ in total. The six cross terms are
\[
\sum_{ijkl}\Psi^1_{ijkl}\Psi^2_{ijkl}=-S_\alpha,\quad
\sum_{ijkl}\Psi^1_{ijkl}\Psi^3_{ijkl}=S_\alpha,
\]
\[
\sum_{ijkl}\Psi^2_{ijkl}\Psi^4_{ijkl}=S_\alpha,\quad
\sum_{ijkl}\Psi^3_{ijkl}\Psi^4_{ijkl}=-S_\alpha.
\]
So the pairs $\Psi^1\Psi^2$ and $\Psi^1\Psi^3$ cancel, as do $\Psi^2\Psi^4$ and $\Psi^3\Psi^4$. Finally,
\[
\sum_{ijkl}\Psi^1_{ijkl}\Psi^4_{ijkl}=-\Big(\sum_i h^\alpha_{ii}\Big)^2=0,\quad
\sum_{ijkl}\Psi^2_{ijkl}\Psi^3_{ijkl}=-\Big(\sum_i h^\alpha_{ii}\Big)^2=0,
\]
by the trace-freeness $\sum_i h^\alpha_{ii}=0$. Summing over $\alpha$ gives
\[
|\Phiv^0(h)|^2=\sum_{\alpha}4n\,S_\alpha=4nS .
\]
The Lipschitz estimate follows from the linearity of $\Phiv^0$:
$$
|\Phiv^0(h)-\Phiv^0(\tilde h)|=|\Phiv^0(h-\tilde h)|=2\sqrt n\,|h-\tilde h|,$$
since $h-\tilde h$ is again trace-free and symmetric.
\end{proof}

Put
\[
 \mathcal F(a):=\Phiv(a)-\Phiv^0(a)
 =T_1(a,G(a))+T_2(a,G(a))+N(a,R^\perp(a))
\]
and
\[
 \mathcal Q(a):=
 \sum_{\alpha,\beta}\langle A_\alpha,A_\beta\rangle^2
 +\sum_{\alpha,\beta}|[A_\alpha,A_\beta]|^2.
\]

\begin{lemma}
    \label{lem:cubic}
For every trace-free tuple $a$ of real symmetric matrices, one has
\begin{equation}\label{eq:slot}
 2|T_1(a,G(a))|^2+|N(a,R^\perp(a))|^2
 \leqslant S\mathcal Q(a).
\end{equation}
Consequently,
\begin{equation}\label{eq:cubicdiag}
 |\mathcal F(a)|^2\leqslant3S\mathcal Q(a)
 \leqslant\frac92S^3.
\end{equation}
Moreover, the constant in \eqref{eq:slot} is sharp.
\end{lemma}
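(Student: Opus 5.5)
The plan is to prove \eqref{eq:slot} by estimating the two slots separately, each in terms of invariant trace expressions, and then to deduce \eqref{eq:cubicdiag} from \eqref{eq:slot}. The whole statement is invariant under $O(n)\times O(q)$. I will therefore rotate the normal frame so that the Gram matrix $\langle A_\alpha,A_\beta\rangle$ is diagonal, with $x_\alpha=|A_\alpha|^2$ and $\sum_\alpha x_\alpha=S$. When convenient I will also diagonalize one shape operator in the tangent frame.

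\emph{The normal slot.} Since $N(a,R^\perp)_{\alpha ijkl}=\sum_\beta a^\beta_{ij}[A_\beta,A_\alpha]_{kl}$, expanding the square gives
\[
|N|^2=\sum_{\alpha,\beta,\beta'}\langle A_\beta,A_{\beta'}\rangle\,\langle[A_\beta,A_\alpha],[A_{\beta'},A_\alpha]\rangle .
\]
In the diagonal Gram frame this reduces to $|N|^2=\sum_{\alpha,\beta}x_\beta|[A_\alpha,A_\beta]|^2$. This is at most $S\sum_{\alpha,\beta}|[A_\alpha,A_\beta]|^2$, but \eqref{eq:slot} needs the slack to be shared with the $T_1$ term, so I will keep the exact weighted form.

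\emph{The Gauss slot.} For fixed $\alpha,k,l$, the $(i,j)$-entries of $T_1(a,G)$ form the matrix $G_{kl}^{T}A_\alpha$, where $(G_{kl})_{ri}=G_{rikl}$. Substituting $G_{rikl}=\sum_\gamma\big((A_\gamma)_{rk}(A_\gamma)_{il}-(A_\gamma)_{rl}(A_\gamma)_{ik}\big)$ and summing over $k,l$ writes $|T_1|^2$ as a combination of quartic-times-quadratic traces. Typical terms are $\operatorname{tr}(A_\gamma A_\alpha^2A_\delta)\operatorname{tr}(A_\gamma A_\delta)$ and $\operatorname{tr}(A_\gamma A_\alpha A_\delta A_\gamma A_\alpha A_\delta)$. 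I will then group these terms so that $2|T_1|^2$ is bounded by
\[
\sum_\beta x_\beta\Big(\sum_\gamma\langle A_\beta,A_\gamma\rangle^2+\sum_\gamma|[A_\beta,A_\gamma]|^2-\sum_\gamma|[A_\beta,A_\gamma]|^2\Big)\cdot(\text{weights})
\]
plus a remainder. The goal is for the remainder to be absorbed by $\sum_{\alpha,\beta}(S-x_\beta)|[A_\alpha,A_\beta]|^2$, which is exactly the slack left by the normal slot. The tool will be Cauchy--Schwarz applied in the summation index $r$ separately for each fixed $\alpha$, which yields factors $x_\alpha$ that sum to $S$. This is the step I expect to be the main obstacle. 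The first thing to try is to reduce, by bilinearity, to the $2\times2$ interaction between one $A_\alpha$ and one $A_\gamma$ in a frame diagonalizing $A_\alpha$. There the inequality becomes an explicit quadratic-form inequality in the entries, which I would verify with the Böttcher--Wenzel estimate $|[X,Y]|^2\leqslant2|X|^2|Y|^2$.

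\emph{Consequences and sharpness.} The map $T_2(a,G)$ is obtained from $T_1(a,G)$ by swapping $i$ and $j$, because $a^\alpha$ is symmetric; hence $|T_2|=|T_1|$. The weighted Cauchy--Schwarz inequality $(2x+y)^2\leqslant3(2x^2+y^2)$ then gives
\[
|\mathcal F|^2\leqslant(|T_1|+|T_2|+|N|)^2\leqslant3\big(2|T_1|^2+|N|^2\big)\leqslant3S\mathcal Q(a).
\]
The Li--Li inequality $\mathcal Q\leqslant\frac32S^2$ then yields the bound $\frac92S^3$. For sharpness I will evaluate both sides on the Pauli tuple generating $E_\rho$. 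There all $2\times2$ quantities are explicit, and I expect equality in \eqref{eq:slot}.
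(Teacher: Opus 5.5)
\textbf{There is a genuine gap.} Your treatment of the consequences is fine. The identity $|T_2|=|T_1|$ via the $i\leftrightarrow j$ swap, the three-term Cauchy--Schwarz bound, Li--Li, and the Pauli evaluation for sharpness are all what is needed. The gap is \eqref{eq:slot} itself, specifically the Gauss slot, for which the proposal contains no argument.

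\emph{What fails in the proposal.} The displayed bound for $2|T_1|^2$ adds and subtracts the same commutator sum and carries unspecified ``weights'', so it has no content. Recasting the goal as $2|T_1|^2\le SI+\sum_{\alpha,\beta}(S-x_\beta)|[A_\alpha,A_\beta]|^2$, with $I=\sum_{\alpha,\beta}\langle A_\alpha,A_\beta\rangle^2$, is just \eqref{eq:slot} rewritten. The suggested reduction ``by bilinearity'' to a $2\times2$ interaction of one $A_\alpha$ with one $A_\gamma$ is not available. The quantity $|T_1(a,G(a))|^2$ is a sextic form whose expansion couples three normal indices (one from the first slot, one from each copy of $G$) together with the full $n\times n$ tangent structure. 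It does not split into pairwise $2\times2$ pieces. It is also unclear what B\"ottcher--Wenzel would contribute: it bounds commutators from above, while $J=\sum_{\alpha,\beta}|[A_\alpha,A_\beta]|^2$ appears on the larger side of \eqref{eq:slot}.

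\emph{Missing idea 1: the normal slot closes by itself.} Since $|[A_\alpha,A_\beta]|=|[A_\beta,A_\alpha]|$ and the diagonal terms vanish,
$\sum_{\alpha,\beta}x_\beta|[A_\alpha,A_\beta]|^2=\frac12\sum_{\alpha\ne\beta}(x_\alpha+x_\beta)|[A_\alpha,A_\beta]|^2\le\frac S2J$,
because $x_\alpha+x_\beta\le S$ for $\alpha\ne\beta$. So no slack has to be shared, and it suffices to prove $2|T_1|^2\le S(I+\frac12J)$.

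\emph{Missing idea 2: a directional operator estimate for the Gauss slot.} This bound comes from an operator estimate, not from expanding traces.
With $H=\sum_\alpha A_\alpha^2\succeq0$ and $\mathcal K_{rs}=\sum_{i,k,l}G_{rikl}G_{sikl}$, one has $|T_1|^2=\operatorname{tr}(H\mathcal K)\le S\,\lambda_{\max}(\mathcal K)$.
For a unit vector $x$, write $G(x,e_i,\cdot,\cdot)=\sum_\alpha(A_\alpha x)\wedge(A_\alpha e_i)$. Then $x^T\mathcal Kx=2\langle\Gamma,U\rangle-2Z+J_x$, where
\begin{itemize}
\item $\Gamma_{\alpha\beta}=\langle A_\alpha,A_\beta\rangle$,
\item $U_{\alpha\beta}=\langle A_\alpha x,A_\beta x\rangle$,
\item $Z=\sum_{\alpha,\beta}|A_\alpha A_\beta x|^2$,
\item $J_x=\sum_{\alpha,\beta}|[A_\alpha,A_\beta]x|^2$.
\end{itemize}
The $x$-component of $A_\alpha A_\beta x$ is $U_{\alpha\beta}$, which is symmetric in $\alpha,\beta$. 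Projecting onto $x^\perp$ therefore gives $J_x\le4(Z-|U|^2)$. Hence $x^T\mathcal Kx\le\frac12I-2|U-\frac12\Gamma|^2+\frac12J_x$.
The bound $|Wx|^2\le\frac12|W|^2$ for skew-symmetric $W$ and unit $x$ then yields $\mathcal K\le(\frac12I+\frac14J)\operatorname{Id}$, and so $2|T_1|^2\le S(I+\frac12J)$.

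Without an estimate of this kind, the core of the lemma remains unproved.
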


\begin{proof}
Write
\[
 \Gamma_{\alpha\beta}=\langle A_\alpha,A_\beta\rangle,
 \quad I=|\Gamma|^2,
 \quad J=\sum_{\alpha,\beta}|[A_\alpha,A_\beta]|^2.
\]
For a unit vector $x\in\RR^n$, set
\[
 K(x)=\sum_{i,k,l}G(x,e_i,e_k,e_l)^2,
 \quad u_\alpha=A_\alpha x,
 \quad U_{\alpha\beta}=\langle u_\alpha,u_\beta\rangle.
\]
Viewing $G(x,e_i,\cdot,\cdot)$ as a 2-tensor, one has
\[
G(x,e_i,\cdot,\cdot)=\sum_\alpha (A_\alpha x)\wedge (A_\alpha e_i).
\]
Using
\[
 \langle u\wedge v,p\wedge q\rangle
 =2\big(\langle u,p\rangle\langle v,q\rangle
 -\langle u,q\rangle\langle v,p\rangle\big),
\]
we obtain by a direct expansion that
\begin{equation}\label{eq:Kexpand}
 K(x)=2\langle\Gamma,U\rangle-2Z+J_x,
\end{equation}
where
\[
 Z=\sum_{\alpha,\beta}|A_\alpha A_\beta x|^2,
 \quad
 J_x=\sum_{\alpha,\beta}|[A_\alpha,A_\beta]x|^2.
\]

Let $P_x$ denote orthogonal projection onto $x^\perp$ and put
$v_{\alpha\beta}=P_x(A_\alpha A_\beta x)$. By the symmetry of the shape
operators,
\[
 Z-|U|^2=\sum_{\alpha,\beta}|v_{\alpha\beta}|^2,
 \quad
 [A_\alpha,A_\beta]x=v_{\alpha\beta}-v_{\beta\alpha}.
\]
It follows that
\begin{equation}\label{eq:Jxprojection}
 J_x\leqslant4\big(Z-|U|^2\big).
\end{equation}
Combining \eqref{eq:Kexpand} and \eqref{eq:Jxprojection} we obtain
\[
 \begin{split}
 K(x)
 &\leqslant2\langle\Gamma,U\rangle-2|U|^2+\frac12J_x\\
 &=\frac12I-2\left|U-\frac12\Gamma\right|^2+\frac12J_x.
 \end{split}
\]
Every commutator $[A_\alpha,A_\beta]$ is skew-symmetric.  Hence, for
$|x|=1$,
\[
 |[A_\alpha,A_\beta]x|^2
 \leqslant\frac12|[A_\alpha,A_\beta]|^2.
\]
Therefore,
\begin{equation}\label{eq:directionalG}
 K(x)\leqslant\frac12I+\frac14J.
\end{equation}

Define the positive semidefinite matrices
\[
 H=\sum_\alpha A_\alpha^2,
 \quad
 \mathcal K_{rs}=\sum_{i,k,l}G_{rikl}G_{sikl}.
\]
Since $x^T\mathcal Kx=K(x)$, we obtain by \eqref{eq:directionalG} that
$\mathcal K\leqslant(I/2+J/4)\operatorname{Id}$.  Moreover,
\[
 |T_1(a,G(a))|^2=\operatorname{tr}(H\mathcal K),
 \quad \operatorname{tr}H=S.
\]
Thus,
\begin{equation}\label{eq:T1sharp}
 2|T_1(a,G(a))|^2\leqslant S\left(I+\frac12J\right).
\end{equation}

Rotate the normal frame so that
$\Gamma_{\alpha\beta}=s_\alpha\delta_{\alpha\beta}$, where
$s_\alpha=|A_\alpha|^2$ and $\sum_\alpha s_\alpha=S$.  Then
\[
 \begin{split}
 |N(a,R^\perp(a))|^2
 &=\sum_{\alpha,\beta}s_\beta|[A_\beta,A_\alpha]|^2\\
 &=\frac12\sum_{\alpha,\beta}(s_\alpha+s_\beta)
 |[A_\alpha,A_\beta]|^2
 \leqslant\frac S2J.
 \end{split}
\]
Together with \eqref{eq:T1sharp}, this proves \eqref{eq:slot}.
Since $|T_2|=|T_1|$, the elementary three-term inequality and the Li--Li
inequality give \eqref{eq:cubicdiag}.

For sharpness, take
\[
 A_1=c\begin{pmatrix}1&0\\0&-1\end{pmatrix}\oplus0,
 \quad
 A_2=c\begin{pmatrix}0&1\\1&0\end{pmatrix}\oplus0,
 \quad 
 A_3=\dots=A_q=0.
\]
By a direct computation,
$S=4c^2$, $I=8c^4$, $J=16c^4$, and
$|T_1|^2=|N|^2=32c^6$. So the equality holds in \eqref{eq:slot}.
\end{proof}

As $\mathcal F$ is a cubic homogeneous mapping, it has a unique symmetric
trilinear polarization $\mathcal T$ satisfying
$\mathcal F(a)=\mathcal T(a,a,a)$. 
Explicitly,
\[
 \mathcal T (u,v,w)=\frac{1}{6}\Big[
 \mathcal{F} (u+v+w) -\mathcal{F}(u+v)-\mathcal{F}(u+w)-\mathcal{F}(v+w)+\mathcal{F}(u)+\mathcal{F}(v)+\mathcal{F}(w)
 \Big].   
\]

\begin{corollary}\label{cor:trilinear}
For all trace-free symmetric tuples $u,v,w$,
\begin{equation}\label{eq:trilinear}
 |\mathcal T(u,v,w)|\leqslant\frac3{\sqrt2}|u||v||w|.
\end{equation}
\end{corollary}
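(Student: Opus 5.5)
The plan is to reduce \eqref{eq:trilinear} to the diagonal bound in Lemma~\ref{lem:cubic}, using the classical fact (Banach, Kellogg) that a \emph{symmetric} multilinear form on a real inner product space has the same norm as its restriction to the diagonal. The space of trace-free symmetric $q$-tuples, with the Frobenius inner product, is a finite-dimensional real inner product space $V$, and $|a|^2=S(a)$ for $a\in V$. Sums of trace-free tuples are again trace-free. Hence $\mathcal F$ is defined on all of $V$, and the polarization formula shows that $\mathcal T$ is a well-defined symmetric trilinear map $V^3\to W$, where $W$ is the Euclidean space of $4$-index normal-valued tensors.

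The first step is the diagonal bound. By \eqref{eq:cubicdiag}, $|\mathcal F(a)|^2\leqslant\frac92 S^3=\frac92|a|^6$, so
\[
|\mathcal T(a,a,a)|\leqslant\frac{3}{\sqrt2}|a|^3\qquad\text{for all }a\in V.
\]

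The second step scalarizes. Fix a unit vector $e\in W$ and set $\tau_e(u,v,w)=\langle e,\mathcal T(u,v,w)\rangle$. This is a real-valued symmetric trilinear form on $V$. By Cauchy--Schwarz, $|\tau_e(a,a,a)|\leqslant\frac3{\sqrt2}|a|^3$. If we know $\sup_{|u|=|v|=|w|=1}|\tau_e|=\sup_{|a|=1}|\tau_e(a,a,a)|$, then $|\tau_e(u,v,w)|\leqslant\frac3{\sqrt2}|u||v||w|$. Taking $e=\mathcal T(u,v,w)/|\mathcal T(u,v,w)|$ (when this is nonzero) then gives \eqref{eq:trilinear}.

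The main obstacle is the equality of the two suprema for a real symmetric trilinear form $\tau$ on a Hilbert space. One can cite Banach's theorem. To keep the argument self-contained, I would argue as follows. Let $M=\max_{|u|=|v|=|w|=1}\tau(u,v,w)$, which is attained by compactness, and among maximizers choose one maximizing $\langle u,v\rangle$. Fix $w$. Then $B(x,y)=\tau(x,y,w)$ is a symmetric bilinear form whose operator norm is $M$. For a symmetric bilinear form on a real inner product space, the operator norm equals the largest absolute eigenvalue, so it is attained on the diagonal: there is a unit $x$ with $|\tau(x,x,w)|=M$, and after a sign change $\tau(x,x,w)=M$. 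Now apply the same argument to the pair $(x,w)$ with $x$ fixed. This produces a unit $y$ with $\tau(x,y,y)=M$. Iterating this symmetrization is the classical proof. Alternatively, take the maximizer with the largest pairwise inner products: the Lagrange conditions $\tau(\cdot,v,w)=Mu$ and its analogues force $u=\pm v=\pm w$ up to sign, so $M$ is attained on the diagonal.

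I would present this step as an appeal to Banach's polarization theorem for symmetric multilinear forms on real Hilbert spaces, with the bilinear-eigenvalue reduction sketched above. Everything else is immediate from Lemma~\ref{lem:cubic}.
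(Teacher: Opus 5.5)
Your proposal is correct and follows the paper's own proof essentially step for step: the diagonal bound $|\mathcal T(a,a,a)|\leqslant\frac{3}{\sqrt2}|a|^3$ from Lemma~\ref{lem:cubic}, scalarization against a unit vector of the target space, Banach's theorem for symmetric multilinear forms on a real Hilbert space, and then choosing that unit vector along $\mathcal T(u,v,w)$ (which is equivalent to the paper's supremum over $\eta$). Your optional self-contained sketch of Banach's theorem is loose as written: the slot-by-slot iteration is never shown to reach the diagonal, and the Lagrange conditions alone do not force $u=\pm v=\pm w$. What does the work is choosing a maximizer with maximal $\langle u,v\rangle+\langle v,w\rangle+\langle w,u\rangle$, normalizing signs, and applying the replacement $(u,v)\mapsto(e,e)$ with $e=(u+v)/|u+v|$. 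Since you ultimately cite the theorem, as the paper does, this does not affect the argument.
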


\begin{proof}
By Lemma~\ref{lem:cubic},
$|\mathcal T(a,a,a)|\leqslant3|a|^3/\sqrt2$.  For every unit vector $\eta$
in the target tensor space, the scalar form
$\langle\mathcal T(\cdot,\cdot,\cdot),\eta\rangle$ is symmetric.  By Banach's theorem for symmetric multilinear forms on a real Hilbert space
(see e.g., \cite{CarandoRodriguez,Pappas-etc}),  its norm is identified with its diagonal norm.  Taking the supremum over $\eta$,  \eqref{eq:trilinear} is proved.
\end{proof}


Put
\[
 \mathcal V:=\bigl(\Sym^2_0(\RR^n)\bigr)^q.
\]
With the Euclidean inner products on the tensor spaces, the differential of
$\Phiv$ at $H\in  \mathcal V$ is the linear map
\[
 \begin{split}
 D\Phiv_H:\mathcal V &\longrightarrow  \RR^q\otimes(\RR^n)^{\otimes4}\\
 \quad d &\longmapsto (D\Phiv_H)[d]
\end{split}
\]
defined by
\[
 (D\Phiv_H)[d]  :=\left.\frac{\mathd}{\mathd s}\right|_{s=0}\Phiv(H+sd)
 =\lim_{s\to0}\frac{\Phiv(H+sd)-\Phiv(H)}{s}.
\]

\begin{lemma}
    \label{lem:spectrum}
Let $h_0\in E_{2n/3}$ and put $L=D\Phiv_{h_0}$.  If
$d\perp T_{h_0}E_{2n/3}$, write
\[
 d=xe_0+z,
 \quad e_0=\frac{h_0}{\sqrt{2n/3}},
 \quad z\perp e_0,\quad z\perp T_{h_0}E_{2n/3}.
\]
Then
\begin{equation}\label{eq:spectral}
 |Ld|^2\leqslant n^2\big(\lambda_nx^2+\kappa_n^2|z|^2\big),
\end{equation}
where
\[
 \lambda_n=18-\frac{20}{n},
 \quad
 \kappa_3^2=\frac43,
 \quad
 \kappa_n^2=\frac{2(n+2)}{3n}\quad(n\geqslant4).
\]
\end{lemma}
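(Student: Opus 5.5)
We will work in the fixed frame in which $h_0$ is the standard Pauli representative. Both $\Phiv$ and the orbit $E_{2n/3}$ are $O(n)\times O(q)$-equivariant, and $L$ is equivariant with respect to the stabilizer of $h_0$. So it suffices to take
\[
h_0=\Big(c\begin{pmatrix}1&0\\0&-1\end{pmatrix}\oplus0,\;c\begin{pmatrix}0&1\\1&0\end{pmatrix}\oplus0,\;0,\ldots,0\Big),\qquad 4c^2=\frac{2n}{3}.
\]
Since $\Phiv=\Phiv^0+\mathcal F$ with $\Phiv^0$ linear and $\mathcal F(a)=\mathcal T(a,a,a)$, the differential splits as
\[
Ld=\Phiv^0(d)+3\,\mathcal T(h_0,h_0,d).
\]
The first term is controlled exactly by Lemma~\ref{lem:flat-ricci-norm}. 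The second is an explicit bilinear expression in $h_0$ and $d$, built from $T_1,T_2,N$ with $G$ and $R^\perp$ linearized.

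\textbf{Step 1: the radial direction.} Scaling gives $Lh_0=\frac{\mathd}{\mathd s}\big|_{s=1}\Phiv(sh_0)=\Phiv^0(h_0)+3\mathcal F(h_0)$. Hence
\[
|Le_0|^2=\frac{3}{2n}\Big(4n\cdot\tfrac{2n}{3}+6\langle\Phiv^0(h_0),\mathcal F(h_0)\rangle+9|\mathcal F(h_0)|^2\Big).
\]
We evaluate this on the $2\times2$ Pauli block by hand. The sharpness computation in Lemma~\ref{lem:cubic} already supplies $|T_1|^2=|N|^2=32c^6$. The only remaining inputs are the cross terms between $T_1,T_2,N$ and their pairing with $\Phiv^0(h_0)$. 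We expect the result to be exactly $n^2\lambda_n=18n^2-20n$.

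\textbf{Step 2: the orthogonal complement.} We describe $T_{h_0}E_{2n/3}$ explicitly. It is spanned by the $\mathfrak{so}(n)$-action $[X,A_\alpha]$ and the $\mathfrak{so}(q)$-action mixing $A_1,A_2$ with each other and with $A_\gamma$ for $\gamma\geqslant3$. We then decompose the normal space $\{z\perp e_0,\ z\perp T_{h_0}E_{2n/3}\}$ into isotypic pieces under the stabilizer $O(n-2)\times O(q-2)\times(\text{finite/circle part acting on the Pauli block})$. The pieces are:
\begin{enumerate}[(i)]
\item the remaining $2\times2$ block components of $A_1,A_2$;
\item the mixed $2\times(n-2)$ off-diagonal entries of $A_1,A_2$;
\item the $(n-2)\times(n-2)$ blocks of $A_1,A_2$, including the trace part $\operatorname{diag}(\ast,\ast)\oplus(\ast)\mathrm{Id}_{n-2}$ forced by trace-freeness;
\item the components $A_\gamma$, $\gamma\geqslant3$, split the same way into $2\times2$, mixed and lower blocks.
\end{enumerate}
Because $e_0$ spans an invariant line, Schur's lemma makes $\langle Le_0,Lz\rangle$ vanish on every piece not isomorphic to the trivial one. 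On any trivial-type piece we check the pairing by direct computation. This removes cross terms and yields the diagonal form in \eqref{eq:spectral}.

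\textbf{Step 3: the bound $\kappa_n^2$.} On each isotypic piece, $L^TL$ acts as a small matrix, essentially scalar by Schur's lemma. We compute $|Lz|^2=4n|z|^2+6\langle\Phiv^0(z),\mathcal T(h_0,h_0,z)\rangle+9|\mathcal T(h_0,h_0,z)|^2$ piece by piece using $c^2=n/6$. We then take the largest ratio to $n^2|z|^2$. We expect the maximum to be $\frac{2(n+2)}{3n}$ for $n\geqslant4$. For $n=3$ the lower block is $1\times1$ and is tied to the $2\times2$ trace by trace-freeness, which changes which piece dominates and produces $\frac43$.

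\textbf{Main obstacle.} The hardest step is the bookkeeping in Step 3: writing out $\mathcal T(h_0,h_0,z)$ correctly. This means polarizing $G$ and $R^\perp$ so that both the $a$-slot and the curvature slot are linearized, and correctly identifying the tangent directions that must be discarded. If it is cleaner, we would first compute $L$ on the full space $\mathcal V$ as a block matrix and only then restrict to the normal complement. We would verify the $n=3$ case separately by an explicit small computation.
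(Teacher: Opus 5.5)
Your plan is essentially the paper's proof: the same linearization $Ld=\Phiv^0(d)+3\mathcal T(h_0,h_0,d)$ at the standard Pauli tuple, the same splitting of the normal space into $\RR e_0$ and the active/inactive diagonal, mixed and lower blocks (the paper's $\mathcal V_1,\dots,\mathcal V_6$; $\mathcal V_1=\{0\}$ for $n=3$, so $\kappa_3^2=4/3$ comes from $\mathcal V_2,\mathcal V_5$), a piece-by-piece evaluation of $|Lz|^2/|z|^2$, and a maximum. The only variation is that you remove cross terms by invariance under the stabilizer $O(2)\times O(n-2)\times O(q-2)$ and Schur's lemma, where the paper checks orthogonality block by block in Appendix~C.3; this is legitimate, since these summands (with $\mathcal V_5$ split into trace and trace-free parts) are pairwise non-isomorphic irreducible representations and $\RR e_0$ is the only trivial one. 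Be aware, though, that your proposal only predicts $\lambda_n$ and $\kappa_n^2$. The substance of the lemma is the explicit component computation of Appendix~C.1--C.2, which you still have to carry out; your scaling route for $e_0$ does reproduce $18n^2-20n$, since $\langle\Phiv^0(h_0),\mathcal F(h_0)\rangle=-\frac83n^2$ and $|\mathcal F(h_0)|^2=\frac43n^3$.
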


\begin{proof} In the present setting,
$\Phiv=\Phiv^0+\mathcal F$ and
$\mathcal F(a)=\mathcal T(a,a,a)$. 
As\[
 \begin{aligned}
 \mathcal F(H+sd)
 ={}& \mathcal T(H+sd,H+sd,H+sd)\\ 
 ={}&\mathcal F(H) +3s\mathcal T(H,H,d)+3s^2\mathcal T(H,d,d)+s^3\mathcal T(d,d,d),
 \end{aligned}
\]
one has
\[
 \Phiv(H+sd)
=\Phiv(H) +s \Big(\Phiv^0(d) +3\mathcal T(H,H,d) \Big)+3s^2\mathcal T(H,d,d)+s^3\mathcal T(d,d,d).
 \]
By the definition of the differential of $\Phiv$ at $H$, this gives
\[
 (D\Phi_H)[d]=\Phiv^0(d)+3\mathcal T(H,H,d).
\]

The curvature terms $G(a)$ and $R^\perp(a)$ are homogeneous of degree two in $a$. 
As in the expansion of $\mathcal F (H+sd)$, one has
\begin{equation}\label{eq:DG_H}
    G(H+sd)=G(H)+s(DG_H)[d]+s^2G(d),
\end{equation}
\begin{equation}\label{eq:DR_H}
R^\perp(H+sd) =R^\perp(H)+s(DR^\perp_H)[d]+s^2R^\perp(d).
\end{equation}

Since $R^0$ is a fixed tensor, $DR_H=DG_H$.

By the definition of $\Phi$, one has
\begin{equation}\label{def-Phi}
\begin{split}
 \Phiv(H+sd)
 =&\Phiv^0(H+sd)+T_1(H+sd,G(H+sd))\\
 &+T_2(H+sd,G(H+sd))+N(H+sd,R^\perp(H+sd)).
\end{split}
\end{equation}

Since $T_1(\cdot , \cdot)$ is bilinear, one has
\[
\frac{\mathd}{\mathd s} T_1(H+sd,G(H+sd))= T_1( d,G(H+sd))+T_1\left(H+sd, \frac{\mathd}{\mathd s}G(H+sd)\right).
\]
By \eqref{eq:DG_H}, one has
\[
\left.\frac{\mathd}{\mathd s}\right|_{s=0} G(H+sd)= (DG_H)[d].
\]
Therefore, 
\[
\left.\frac{\mathd}{\mathd s}\right|_{s=0}T_1(H+sd,G(H+sd))=T_1( d,G(H))+T_1(H, (DG_H)[d]).
\]
Similarly, 
\[
\left.\frac{\mathd}{\mathd s}\right|_{s=0}T_2(H+sd,G(H+sd))= T_2( d,G(H))+T_2(H, (DG_H)[d]),
\]
\[
\left.\frac{\mathd}{\mathd s}\right|_{s=0}N(H+sd,R^\perp(H+sd))= N( d,R^\perp(H))+N(H, (DR^\perp_H)[d]).
\]
So, by differentiating both sides of \eqref{def-Phi}, we get
\begin{equation}\label{eq:normal-linearization}
 \begin{aligned}
  (D\Phi_H)[d]={}&\Phiv^0(d)
 +T_1(d,G(H))+T_2(d,G(H))\\
 &+T_1\bigl(H,(DG_H)[d]\bigr)
 +T_2\bigl(H,(DG_H)[d]\bigr)\\
 &+N(d,R^\perp(H))
 +N\bigl(H,(DR^\perp_H)[d]\bigr).
 \end{aligned}
\end{equation}

Take $H=h_0\in E_{2n/3}$. By $O(n)\times O(q)$-equivariance, choose orthonormal tangent and normal frames in which $h_0$ is the standard Pauli tuple displayed below:
\[
 c:=\sqrt{\frac n6},
 \quad
 P_1:=\begin{pmatrix}1&0\\0&-1\end{pmatrix},
 \quad
 P_2:=\begin{pmatrix}0&1\\1&0\end{pmatrix},
\]
\[
 H_1=cP_1\oplus0,
 \quad H_2=cP_2\oplus0,
 \quad H_\mu=0\quad(\mu\geqslant3).
\]
For $d=(D_\alpha)\in \mathcal{V}$, relative to $\RR^n=\RR^2\oplus\RR^{n-2}$, write
\[
 D_\alpha=
 \begin{pmatrix}
 B_\alpha&C_\alpha\\
 C_\alpha^T&E_\alpha
 \end{pmatrix},
\]
where $B_\alpha\in \Sym^2(\RR^2)$, $C_\alpha\in \RR^{2\times(n-2)}$, $E_\alpha\in \Sym^2(\RR^{n-2})$. Then
 $d\perp T_HE_{2n/3}$
 if and only if
\[
 \begin{cases}
 \langle B_1,P_2\rangle-\langle B_2,P_1\rangle=0,\\
 (D_1)_{1a}+(D_2)_{2a}=0,&a\geqslant3,\\
 (D_2)_{1a}-(D_1)_{2a}=0,&a\geqslant3,\\
 \langle B_\mu,P_1\rangle=\langle B_\mu,P_2\rangle=0,
 &\mu\geqslant3.
 \end{cases}
\]
See Appendix \ref{app:pauli-normal} for the proof.

Let
\[
 \mathcal N_H:=(T_HE_{2n/3})^\perp.
\]
Appendix~\ref{app:normal-equations} gives the orthogonal decomposition
\[
 \mathcal N_H
 =\RR e_0\oplus^\perp\mathcal V_1\oplus^\perp\cdots
 \oplus^\perp\mathcal V_6.
\]
The computations in Appendix~\ref{app:direct-normal-spectrum} show that
\[
 L(\RR e_0),L(\mathcal V_1),\ldots,L(\mathcal V_6)
 \quad\text{are pairwise orthogonal},
\]
and that
\[
 |Le_0|^2=n^2\lambda_n,
 \qquad
 |Lz_j|^2=n^2\mu_j|z_j|^2
 \quad(z_j\in\mathcal V_j),
\]
where
\[
\begin{split}
\mu_1=&\dfrac{2(n+2)}{3n},    \quad  \quad  \ \textrm{dim} \mathcal V_1=n(n-3) ,\\
\mu_2=&\dfrac4n,    \quad  \quad \quad \quad \  \quad \textrm{dim} \mathcal V_2= 2(n-2),\\
\mu_3=&\dfrac{2(n+10)}{9n},    \quad \quad  \textrm{dim} \mathcal V_3= 2,\\
\mu_4=&\dfrac{2(n+6)}{9n},  \quad   \quad \  \textrm{dim} \mathcal V_4= 2,\\
\mu_5=&\dfrac4n,    \quad \quad \quad \quad \quad \  \textrm{dim} \mathcal V_5= \dfrac{(q-2)(n-1)(n-2)}2,\\
\mu_6=&\dfrac{n+4}{3n},  \quad \quad \quad  \ \   \textrm{dim} \mathcal V_6=2(q-2)(n-2) .
\end{split}
\]
Hence,
\[
 \begin{aligned}
 1+\sum_{j=1}^6\dim\mathcal V_j
 ={}&1+n(n-3)+2(n-2)+4\\
 &+(q-2)\frac{(n-1)(n-2)}2+2(q-2)(n-2)\\
 ={}&q\left(\frac{n(n+1)}2-1\right)-(2n+2q-7)
 =\dim\mathcal N_H.
 \end{aligned}
\]
Moreover,
\[
\kappa_n^2:= \max_{1\leqslant j\leqslant6}\mu_j
 =
 \begin{cases}
 \dfrac43,&n=3,\\[1mm]
 \dfrac{2(n+2)}{3n},&n\geqslant4.
 \end{cases}
 \]

Finally, write
\[
 z=\sum_{j=1}^6z_j,
 \qquad z_j\in\mathcal V_j,
\]
so that $d=xe_0+\sum_{j=1}^6z_j$.  Since the decomposition of
$\mathcal N_H$ is orthogonal,
\[
 |z|^2=\sum_{j=1}^6|z_j|^2.
\]
By the linearity of $L$ and the pairwise orthogonality of the images,
\[
 \begin{aligned}
 |Ld|^2
 &=\left|xLe_0+\sum_{j=1}^6Lz_j\right|^2\\
 &=x^2|Le_0|^2+\sum_{j=1}^6|Lz_j|^2\\
 &=n^2\lambda_nx^2+n^2\sum_{j=1}^6\mu_j|z_j|^2\\
 &\leqslant n^2\lambda_nx^2
   +n^2\kappa_n^2\sum_{j=1}^6|z_j|^2\\
 &=n^2\bigl(\lambda_nx^2+\kappa_n^2|z|^2\bigr).
 \end{aligned}
\]
This proves \eqref{eq:spectral}.
\end{proof}

\begin{lemma}
    \label{lem:ricci}
Let $h_0\in E_{2n/3}$ minimize the distance from a trace-free symmetric
tuple $h$ to $E_{2n/3}$, and put $d=h-h_0=xe_0+z$ as in
Lemma~\ref{lem:spectrum}.  If $r=|d|$, then
\begin{equation}\label{eq:improvedRicci}
  |\Phiv(h)-\Phiv(h_0)|
 \leqslant n\sqrt{\lambda_nx^2+\kappa_n^2|z|^2}+3\sqrt{3n}\,r^2+\frac3{\sqrt2}r^3.
\end{equation}
Moreover,
$|\Phiv(h_0)|^2=\frac43n^2(n-2)$.
\end{lemma}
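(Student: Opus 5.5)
Since $\Phiv=\Phiv^0+\mathcal F$ with $\Phiv^0$ linear and $\mathcal F(a)=\mathcal T(a,a,a)$, I expand $\Phiv(h_0+d)$ exactly in powers of $d$, as in the proof of Lemma~\ref{lem:spectrum}:
\[
\Phiv(h)-\Phiv(h_0)=\bigl(\Phiv^0(d)+3\mathcal T(h_0,h_0,d)\bigr)+3\mathcal T(h_0,d,d)+\mathcal T(d,d,d)
=Ld+3\mathcal T(h_0,d,d)+\mathcal T(d,d,d),
\]
with $L=D\Phiv_{h_0}$. The expansion is a polynomial identity, so it holds for all $d$, not only infinitesimally.

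Because $h_0$ minimizes the distance from $h$ to the smooth compact orbit $E_{2n/3}$, the first-order condition gives $d\perp T_{h_0}E_{2n/3}$. Lemma~\ref{lem:spectrum} then applies and yields $|Ld|\leqslant n\sqrt{\lambda_nx^2+\kappa_n^2|z|^2}$.

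For the quadratic term, Corollary~\ref{cor:trilinear} together with $|h_0|=\sqrt{2n/3}$ gives
\[
3|\mathcal T(h_0,d,d)|\leqslant 3\cdot\tfrac{3}{\sqrt2}\sqrt{\tfrac{2n}{3}}\,r^2=3\sqrt{3n}\,r^2 .
\]
For the cubic term, Corollary~\ref{cor:trilinear} gives $|\mathcal T(d,d,d)|\leqslant \frac{3}{\sqrt2}r^3$. The triangle inequality then gives \eqref{eq:improvedRicci}.

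For the identity $|\Phiv(h_0)|^2=\frac43n^2(n-2)$, I compute directly in the Pauli frame $H_1=cP_1\oplus0$, $H_2=cP_2\oplus0$ with $c^2=n/6$.
\begin{itemize}
\item The flat part has $|\Phiv^0(h_0)|^2=4n\cdot\frac{2n}{3}=\frac{8n^2}{3}$ by Lemma~\ref{lem:flat-ricci-norm}.
\item $G(h_0)$ is supported on $\RR^2$: $G_{1212}=-2c^2$ there, since $\det$-type terms from $P_1$ and $P_2$ each contribute $-c^2$.
\item $R^\perp_{12}=[H_1,H_2]=2c^2 J\oplus0$, where $J$ is the standard rotation.
\item Writing $\Phiv(h_0)=\Phiv^0(h_0)+\mathcal F(h_0)$, I compute $|\mathcal F(h_0)|^2$ and the cross term $2\langle\Phiv^0(h_0),\mathcal F(h_0)\rangle$ componentwise. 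All indices of the cubic part lie in $\{1,2\}$, so this is a finite $2\times2$ calculation. Only the components of $\Phiv^0(h_0)$ with all indices in $\{1,2\}$ pair with $\mathcal F$.
\end{itemize}
Because only the $\{1,2\}$-block components of $\Phiv^0(h_0)$ pair with $\mathcal F(h_0)$, the $n$-dependence of the total comes only from $|\Phiv^0(h_0)|^2$ and from $c^2=n/6$. I expect the cubic and cross contributions to combine with $\frac{8n^2}{3}$ to give $\frac43n^3-\frac83n^2$. A useful consistency check is that at $n=2$, where the Veronese-type Pauli tuple is genuinely the intrinsic curvature situation, $\Phiv(h_0)$ should vanish.

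The main obstacle is this last bookkeeping: getting the signs of the $T_1,T_2,N$ contributions on the $2\times2$ block right so that the cancellation with the flat part is exact. I would organize it by computing, for each $\alpha\in\{1,2\}$, the $2\times2\times2\times2$ array $\Phiv^\alpha_{ijkl}$ restricted to the block, and the off-block entries, which come only from $\Phiv^0$, separately.
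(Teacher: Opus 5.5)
Your proof of the estimate \eqref{eq:improvedRicci} is exactly the paper's. Minimality gives $d\perp T_{h_0}E_{2n/3}$, and the polynomial identity $\Phiv(h_0+d)-\Phiv(h_0)=Ld+3\mathcal T(h_0,d,d)+\mathcal T(d,d,d)$ holds exactly. Lemma~\ref{lem:spectrum} and Corollary~\ref{cor:trilinear}, together with $|h_0|=\sqrt{2n/3}$, then bound the three terms. For the value $|\Phiv(h_0)|^2=\frac43n^2(n-2)$, the paper simply quotes Lemma~4.3 of Li--Zhao. You instead propose a self-contained computation. That is a legitimate route, but as written it is not yet a proof: you state the target value and say you expect the cubic and cross terms to produce it, without computing them.

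The computation closes quickly once you use the fact you already found, $G_{1212}(h_0)=-2c^2$. On the $\{1,2\}$ block this means $G(h_0)=-2c^2R^0$. Since $h_0$ is supported there, $T_1(h_0,G(h_0))+T_2(h_0,G(h_0))=-2c^2\Phiv^0_{\mathrm b}$, where $\Phiv^0_{\mathrm b}$ is the part of $\Phiv^0(h_0)$ with $i,j,k,l\in\{1,2\}$.

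Repeating the proof of Lemma~\ref{lem:flat-ricci-norm} with index range $\{1,2\}$ gives $|\Phiv^0_{\mathrm b}|^2=8S=32c^2$. The off-block part of $\Phiv(h_0)$ coincides with that of $\Phiv^0(h_0)$, so its squared norm is $4nS-8S=16(n-2)c^2$.

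For $N=N(h_0,R^\perp(h_0))$, the sharpness computation in Lemma~\ref{lem:cubic} gives $|N|^2=32c^6$. Pairing the four terms of $\Phiv^0$ with $h^\beta_{ij}[H_\beta,H_\alpha]_{kl}$ gives
\[
\langle\Phiv^0_{\mathrm b},N\rangle=2\sum_{\alpha,\beta}\langle[H_\alpha,H_\beta],[H_\beta,H_\alpha]\rangle=-2\sum_{\alpha,\beta}|[H_\alpha,H_\beta]|^2=-32c^4.
\]
Hence the block part $(1-2c^2)\Phiv^0_{\mathrm b}+N$ has squared norm $32c^2(1-2c^2)^2-64c^4(1-2c^2)+32c^6=32c^2(1-3c^2)^2$. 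With $c^2=n/6$,
\[
|\Phiv(h_0)|^2=16(n-2)c^2+32c^2(1-3c^2)^2=\frac{8n(n-2)}{3}+\frac{4n(n-2)^2}{3}=\frac43n^2(n-2).
\]
The factor $1-3c^2=(2-n)/2$ is what makes your $n=2$ sanity check come out.
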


\begin{proof}
The minimizing property implies  $d\perp T_{h_0}E_{2n/3}$.  Since
$\Phiv=\Phiv^0+\mathcal T(\cdot,\cdot,\cdot)$ and $\Phiv^0$ is linear,
the exact Taylor expansion is
\[
 \Phiv(h_0+d)-\Phiv(h_0)
 =Ld+3\mathcal T(h_0,d,d)+\mathcal T(d,d,d).
\]
Now apply Lemma~\ref{lem:spectrum}, Corollary~\ref{cor:trilinear}, and the fact
$|h_0|=\sqrt{2n/3}$.  The quadratic coefficient is
\[
 3\cdot\frac3{\sqrt2}\sqrt{\frac{2n}{3}}=3\sqrt{3n},
\]
which proves \eqref{eq:improvedRicci}.  The remaining value at $h_0$ is
the exact Pauli computation of \cite[Lemma~4.3]{LiZhao}.
\end{proof}

\medskip

\section{Proof of the main theorem}

\begin{proof}[Proof of Theorem~\ref{thm:main}]
Since $M$ is not totally geodesic, by the Li--Li theorem one has
$S_{\max}>2n/3$.  Put
\[
 g=S_{\max}-\frac{2n}{3}>0.
\]
Suppose, for contradiction, that
\[
 g\leqslant\delta:=\frac{n-2}{K},
 \quad K=182.
\]
Let $p$ be the point supplied by Corollary~\ref{cor:selectK}.
Write
\[
 S=S(p),\quad \mathcal H=|\nabla^2h|^2(p),\quad
 X=X_n:=\frac{n-2}{n}.
\]

\medskip
\noindent\textbf{Step 1.} 
Define
\[
 y:=\frac{D(p)}{gS}.
\]
By \eqref{eq:selectK}, one has
\begin{equation}\label{eq:coupledy}
 0\leqslant y\leqslant3,
 \quad \mathcal H\leqslant tngS(3-y),
 \quad t=\frac53+\frac4K.
\end{equation}
If $y>0$, put
\[
 \theta=\theta_{K,X}(y):=\frac{3Xy}{2K}.
\]
Since $S\geqslant2n/3$,
\[
 \frac{D(p)}{S^2}=\frac{yg}{S}
 \leqslant\frac{3Xy}{2K}=\theta
 \leqslant\frac9{2K}=\theta_*.
\]
Thus Lemma~\ref{lem:dist} applies.  The function $C(\theta)$ has the
continuous extension $C(0)=5$; when $y=0$, the same conclusion follows by
applying the lemma with any positive admissible radius and then using $D=0$.

Since
\[
 \left(\sqrt S-\sqrt{\frac{2n}{3}}\right)^2
 \leqslant\frac{3}{8n}\left(S-\frac{2n}{3}\right)^2
 \leqslant\frac{3g^2}{8n},
\]
we obtain
\begin{equation}\label{eq:Bnnew}
 r^2:=\dist^2\big(A(p),E_{2n/3}\big)
 \leqslant B_{K,X}(y)(n-2),
\end{equation}
where
\begin{equation*}
 B_{K,X}(y):=
 \frac{yC(3Xy/(2K))}{K}+\frac{3X}{8K^2}.
\end{equation*}
Choose a nearest $h_0\in E_{2n/3}$ and put
\[
 d=h(p)-h_0=xe_0+z,
 \quad r=|d|.
\]

\medskip
\noindent\textbf{Step 2.}
Put $\rho=2n/3$.  Since
$\rho\leqslant|h(p)|^2\leqslant\rho+\delta$, we have
\[
 0\leqslant2\sqrt\rho\,x+r^2\leqslant\delta.
\]
Set
\begin{equation*}
 A_{K,X}(y):=\max\left\{B_{K,X}(y),\frac1K\right\}.
\end{equation*}
Since both $S(p)-\rho$ and $r^2$ are nonnegative, the identity
$2\sqrt\rho\,x+r^2=S(p)-\rho$, together with \eqref{eq:Bnnew}, implies
\begin{equation}\label{eq:xbound}
 |x|\leqslant\frac{A_{K,X}(y)(n-2)}{2\sqrt\rho},
 \quad |z|^2\leqslant B_{K,X}(y)(n-2).
\end{equation}
Define
\begin{equation*}
 \begin{split}
 \varepsilon_{K,n}(y):={}&
 \sqrt{\kappa_n^2B_{K,X}(y)+
 \frac{3\lambda_nX}{8}A_{K,X}(y)^2}\\
 &+3\sqrt3\,B_{K,X}(y)\sqrt X
 +\frac3{\sqrt2}B_{K,X}(y)^{3/2}X.
 \end{split}
\end{equation*}
By Lemma~\ref{lem:ricci}, \eqref{eq:Bnnew}, and \eqref{eq:xbound}, one has
\[
 |\Phiv(h(p))-\Phiv(h_0)|
 \leqslant n\sqrt{n-2}\,\varepsilon_{K,n}(y).
\]
Consequently,
\begin{equation*}
 |\Phiv(h(p))|
 \geqslant n\sqrt{n-2}
 \left(\sqrt{\frac43}-\varepsilon_{K,n}(y)\right).
\end{equation*}

On the other hand, by \eqref{eq:ricci} and \eqref{eq:coupledy}, we have
\begin{equation*}
 |\Phiv(h(p))|^2
 \leqslant U_{K,X}(y)n^2(n-2)
\end{equation*}
where
\begin{equation*}
 U_{K,X}(y):=
 \frac{4t(3-y)}K\left(\frac23+\frac XK\right).
\end{equation*}

\medskip
\noindent\textbf{Step 3.} To complete the proof of Theorem~\ref{thm:main}, we first prove two lemmas.

\begin{lemma}\label{lem:stability-slope}
The function \(C\) is nondecreasing on \([0,9/364]\). Moreover, if
\[
 c(y)=C(3y/364),
\]
then
\begin{equation}\label{eq:stability-derivative}
 c'(y)>\frac35
 \quad\text{for all }y\in[5/2,3].
\end{equation}
In particular,
\begin{equation}\label{eq:stability-affine}
 C(3y/364)<7.326-\frac35(3-y)
 \quad\text{for all }y\in[5/2,3].
\end{equation}
\end{lemma}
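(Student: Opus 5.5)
The plan is to treat Lemma~\ref{lem:stability-slope} as a statement about the explicit one-variable function $C(\theta)$ defined before Lemma~\ref{lem:distS}. I would first establish monotonicity, then a lower bound for the derivative, and obtain \eqref{eq:stability-affine} by integration.

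\emph{Monotonicity.} I would follow the chain of auxiliary quantities in the order they are defined. On $(0,9/364]$:
\begin{itemize}
\item $s=\sqrt\theta$ and $\tau=\theta/(1-s)$ are increasing.
\item Hence $m_1=(1-\tau)/2$ and $m_2=(1-s-\tau)/2$ are decreasing and positive.
\item So $3\theta/(8m_1)$ is increasing, and $w$ is decreasing. Remark~\ref{remark}(2) gives $w>0.19380$, so $w$ stays positive.
\item Therefore $\eta=3/(8w)$ is increasing, and $a=\eta\theta/m_1$ is increasing with $a<0.09858$.
\item $\gamma=1-\sqrt{a(2-a)}$ is decreasing, because $a(2-a)$ increases for $a<1$. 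So $Q$ is increasing, and $u=Q\theta/m_2$ is increasing with $u<0.21595$.
\end{itemize}
It follows that $c_1=\eta/(1+\sqrt{1-a/2})$ and $c_2=2Q/(1+\sqrt{1-u})$ are increasing, since both numerators increase and both denominators decrease. Each of the five summands of $C$ is then a product or quotient of positive monotone factors with the right orientation:
\begin{itemize}
\item $c_i$ increases while $\sqrt{2m_i}$ decreases;
\item $1/(1-s)$ increases;
\item $\theta/(1-s)^2$ increases.
\end{itemize}
So $C$ is nondecreasing, and at $\theta=0$ it has the continuous value $C(0)=5$ quoted in Step~1.

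\emph{Derivative bound.} Since $c'(y)=\frac{3}{364}C'(3y/364)$, the inequality \eqref{eq:stability-derivative} is equivalent to
\[
C'(\theta)>\frac{364}{5}=72.8\qquad\text{for }\theta\in\Big[\frac{15}{728},\frac{9}{364}\Big].
\]
Every summand has positive derivative by the monotonicity step, so a lower bound for the derivative of any sub-collection of summands suffices. For each factor I would write the derivative by the chain rule explicitly (for instance $s'=1/(2\sqrt\theta)$ and $\tau'=(1-s+\theta s'/1)/(1-s)^2$, etc.). Then I would bound each derivative from below on the interval using the monotonicity of the ingredients: increasing positive quantities are evaluated at the left endpoint $15/728$ when they occur in numerators, and at the right endpoint when they occur in denominators.

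The large contributions should come from two places:
\begin{itemize}
\item $c_1/\sqrt{2m_1}$, through $\eta$ and through the factor $1/\sqrt{1-a/2}$ via $d\sqrt{a(2-a)}/d\theta$, which is large because it carries a $1/\sqrt{a}$;
\item $c_2/\sqrt{2m_2}$, through $Q\propto 1/\gamma$ and $\gamma'\sim -a'/\sqrt{2a}$.
\end{itemize}
This singular-looking $\sqrt a$ term is also the source of the steep slope.

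\emph{Main obstacle.} The hard step is making the bound $C'>72.8$ rigorous and not merely numerical. A crude endpoint-freezing bound over the whole interval may lose too much. If it does, I would subdivide $[15/728,9/364]$ into a few subintervals and apply the same monotone interval-arithmetic bounds on each; every estimate is then a finite rational computation.

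\emph{Affine bound.} Given the derivative bound, \eqref{eq:stability-affine} follows from
\[
C(3y/364)=C(\theta_*)-\int_y^3c'(v)\,\mathd v<7.326-\frac35(3-y)
\]
for $y\in[5/2,3]$. Here I use $C(\theta_*)<7.326$ from Remark~\ref{remark}(2), and the integral is at least $\frac35(3-y)$ because $c'>\frac35$ on $[y,3]$. For $y=3$ the inequality is just $C(\theta_*)<7.326$.
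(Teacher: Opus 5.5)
Your proposal is correct and follows essentially the same route as the paper. The paper proves monotonicity by tracing the chain $s,\tau\uparrow$, $m_1,m_2,w\downarrow$, $\eta,a\uparrow$, $\gamma\downarrow$, $Q,u,c_1,c_2\uparrow$, then uses termwise chain-rule lower bounds with endpoint-frozen interval estimates on the single interval $[15/728,9/364]$, and finally integrates from $C(9/364)<7.326$. No subdivision is needed: in the variable $y$ the paper gets $0.06+0.51+0.035+0.011=0.616>3/5$. One minor heuristic slip: $c_1=\eta/(1+\sqrt{1-a/2})$ carries no $1/\sqrt a$ term; that singular factor enters only through $\gamma'$ in $Q$, hence through $c_2$, which is why the $c_2/\sqrt{2m_2}$ summand dominates.
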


\begin{proof}
On the admissible interval, \(s\) and \(\tau\) are increasing, whereas
\(m_1,m_2,w\) are decreasing. Hence \(\eta\) and \(a\) are increasing.
Since \(0\leqslant a<1\), the function
\(\gamma=1-\sqrt{a(2-a)}\) is decreasing. It follows that \(Q,u,c_1,c_2\), and each summand in the definition of \(C\),
are nondecreasing. Continuity at zero proves the first assertion.

To prove \eqref{eq:stability-derivative}, fix \(y\in[5/2,3]\) and put
\[
 \alpha=\frac3{364},\quad \theta=\alpha y,\quad
 \theta_-=\frac{15}{728},\quad \theta_+=\frac9{364}.
\]
Since
\[
 \theta_-\leqslant\theta\leqslant\theta_+,\quad
 \frac17<s<\frac4{25},
\]
we have
\[
 \tau_-:=\frac{\theta_-}{1-1/7}<\tau<
            \frac{\theta_+}{1-4/25}=:\tau_+.
\]
By the definitions of \(m_1,m_2,w\), one has
\begin{align*}
 0.485&<\frac{1-\tau_+}{2}<m_1<
                 \frac{1-\tau_-}{2}<0.49,\\
 0.405&<\frac{1-4/25-\tau_+}{2}<m_2<
                 \frac{1-1/7-\tau_-}{2}<0.42,\\
 0.192&<\frac12\left(0.405-\frac{3\theta_+}{8(0.485)}\right)<w\\
 &<\frac12\left(\frac{1-1/7-\tau_-}{2}
                    -\frac{3\theta_-}{8(0.49)}\right)<0.201.
\end{align*}
Consequently,
\begin{align*}
 \eta&>\frac3{8(0.201)}>1.86,\\
 0.078&<\frac{1.86\,\theta_-}{0.49}<a<
                 \frac{3\theta_+}{8(0.192)(0.485)}<0.1,\\
 \gamma&<1-\sqrt{0.078(2-0.078)}<0.613,\\
 Q&>\frac3{4(0.49)}
                 \left(\frac12+\frac1{0.613}\right)>3.26.
\end{align*}

In the following, all auxiliary quantities are evaluated at $\theta=\alpha y$, and primes denote differentiation with respect to $y$. By direct computation,
\begin{align*}
 s'&=\frac{\alpha}{2s}>\frac{75}{2912},
 &\tau'&=\frac{\alpha(1-s/2)}{(1-s)^2}>\frac1{96},\\
 -m_1'&=\frac{\tau'}2>0.005,
 &-m_2'&=\frac{s'+\tau'}2>0.018.
\end{align*}
Using the preceding bounds, we obtain
\begin{align*}
 -w'&=\frac{-m_2'}2+\frac3{16}
       \left(\frac{\alpha}{m_1}
                 +\frac{\theta(-m_1')}{m_1^2}\right)>0.012,\\
 \eta'&=\frac{3(-w')}{8w^2}>0.11,\\
 a'&=\frac{\eta'\theta+\eta\alpha}{m_1}
                 +\frac{\eta\theta(-m_1')}{m_1^2}>0.036,\\
 -\gamma'&=\frac{(1-a)a'}{\sqrt{a(2-a)}}>0.074,\\
 Q'&=\frac{3(-m_1')}{4m_1^2}
                 \left(\frac12+\frac1\gamma\right)
                 +\frac{3(-\gamma')}{4m_1\gamma^2}>\frac13,\\
 u'&=\frac{Q'\theta+Q\alpha}{m_2}
                 +\frac{Q\theta(-m_2')}{m_2^2}>0.087.
\end{align*}

Set
\[
 r_1=\sqrt{1-a/2},\quad r_2=\sqrt{1-u},\quad D_i=1+r_i,
\]
and write \(c=T_1+T_2+T_3+1+T_4\), where
\[
 T_1=\frac{\eta}{D_1\sqrt{2m_1}},\quad
 T_2=\frac{2Q}{D_2\sqrt{2m_2}},\quad
 T_3=\frac1{1-s},\quad
 T_4=\frac{\theta}{(1-s)^2}.
\]
Since \(0<r_i\leqslant1\), \(D_i\leqslant2\), \(\sqrt{2m_1}<1\), and
\(\sqrt{2m_2}<0.917\), it follows that
\begin{align*}
 T_1'
 &=\frac{\eta'}{D_1\sqrt{2m_1}}
   +\frac{\eta a'}{4r_1D_1^2\sqrt{2m_1}}
   +\frac{\eta(-m_1')}{D_1(2m_1)^{3/2}}\\
 &>\frac{0.11}{2}+\frac{1.86\times 0.036}{16}
                       +\frac{1.86\times0.005}2>0.06,\\[2pt]
 T_2'
 &=\frac{2Q'}{D_2\sqrt{2m_2}}
   +\frac{Qu'}{r_2D_2^2\sqrt{2m_2}}
   +\frac{2Q(-m_2')}{D_2(2m_2)^{3/2}}\\
 &>\frac{1/3}{0.917}+\frac{3.26\times 0.087}{4 \times 0.917}
                       +\frac{3.26 \times 0.018}{0.84 \times 0.917}>0.51,\\[2pt]
 T_3'&=\frac{s'}{(1-s)^2}>
                   \frac{75/2912}{(6/7)^2}>0.035,\\
 T_4'&=\frac{\alpha}{(1-s)^2}
              +\frac{2\theta s'}{(1-s)^3}>
                   \frac{3/364}{(6/7)^2}>0.011.
\end{align*}
Thus,
\[
 c'(y)>0.06+0.51+0.035+0.011=0.616>\frac35.
\]
Finally, the previously established bound \(C(9/364)<7.326\) (see Remark \ref{remark})
implies
\[
 c(y)=c(3)-\int_y^3c'(v)\,dv
          <7.326-\frac35(3-y).
 \qedhere
\]
\end{proof}

\begin{lemma}\label{lem:scalar-comparison}
Let \(K=182\), \(n\geqslant3\), and \(X=(n-2)/n\). For \(y\in[0,3]\), set
\[
 Q_n(y)=\sqrt{\frac43}-\varepsilon_{182,n}(y),\quad
 F_n(y)=Q_n(y)^2-U_{182,X}(y).
\]
Then
\begin{equation}\label{eq:scalar-comparison}
 Q_n(y)>0,\quad F_n(y)>\frac1{141120}
 \quad\text{for every }y\in[0,3].
\end{equation}
\end{lemma}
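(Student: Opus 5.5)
The plan is to reduce the two-variable statement (in $n$ and $y$) to a finite verification by monotonicity in $X=(n-2)/n\in[1/3,1)$ and in $y\in[0,3]$.

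\textbf{Step 1: remove the $n$-dependence.} With $K=182$ we have $B_{K,X}(y)=\frac{y\,C(3Xy/364)}{182}+\frac{3X}{8\cdot182^2}$. By Lemma~\ref{lem:stability-slope}, $C$ is nondecreasing on $[0,9/364]$, so $B_{K,X}(y)\leqslant B_{K,1}(y)$ and $A_{K,X}(y)\leqslant A_{K,1}(y)$. Next we use $\lambda_n\leqslant 18$ and $\kappa_n^2\leqslant 4/3$. (For $n\geqslant4$, $\kappa_n^2=\frac{2(n+2)}{3n}\leqslant1$; the case $n=3$, where $\kappa_3^2=4/3$ but $X=1/3$, could also be handled separately if the uniform bound turns out too lossy.) The factors $\sqrt X$ and $X$ are at most $1$. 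This gives $\varepsilon_{182,n}(y)\leqslant\bar\varepsilon(y)$, with $\bar\varepsilon$ independent of $n$ and nondecreasing in $y$. On the other side, $U_{182,X}(y)\leqslant\bar U(y):=\frac{4t(3-y)}{182}\bigl(\frac23+\frac1{182}\bigr)$, which is decreasing in $y$. It then suffices to prove $\sqrt{4/3}-\bar\varepsilon(y)>0$ and $(\sqrt{4/3}-\bar\varepsilon(y))^2-\bar U(y)>1/141120$ for $y\in[0,3]$.

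\textbf{Step 2: split $[0,3]$.} The expected hardest region is near $y=3$. There $\bar U$ vanishes, but $\bar\varepsilon$ is largest, since $C\approx7.326$ gives $B\approx 3\cdot7.326/182\approx0.121$ and hence $\sqrt{(4/3)B}\approx0.40$. For small $y$ the situation reverses: $\bar U(0)\approx 12t/273\approx0.073$ is the largest competitor, while $\bar\varepsilon$ is tiny.

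\begin{itemize}
\item On $[0,5/2]$, use the crude bound $C(3y/364)\leqslant C(\theta_*)<7.326$. Then $B$ is affine in $y$, and both $\bar\varepsilon$ and $\bar U$ are bounded by explicit monotone functions. A partition of $[0,5/2]$ into a handful of subintervals $[y_k,y_{k+1}]$ finishes this range: on each one, bound $\bar\varepsilon$ by its value at $y_{k+1}$ and $\bar U$ by its value at $y_k$.
\item On $[5/2,3]$, the crude bound is too weak in the middle of the range, so I use the affine estimate \eqref{eq:stability-affine}, namely $C<7.326-\frac35(3-y)$. This makes $yC$ an explicit quadratic, so $\bar B(y)$ and hence $\bar\varepsilon(y)$ become explicit elementary functions. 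Then $\bar F(y):=(\sqrt{4/3}-\bar\varepsilon)^2-\bar U$ is a function whose decrease in $\bar\varepsilon$ is partly compensated by the decrease of $\bar U$. I would again check it on a fine grid, using that $\bar\varepsilon$ is increasing and $\bar U$ is decreasing, to get interval lower bounds.
\end{itemize}

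\textbf{Step 3: the endpoint and positivity.} At $y=3$ the check is $\sqrt{4/3}-\bar\varepsilon(3)$ against the margin $\sqrt{1/141120}\approx0.00266$. Here $A=\max\{B,1/182\}=B$, so
\[
\bar\varepsilon(3)=\sqrt{\tfrac43 B+\tfrac{27}{4}B^2}+3\sqrt3\,B+\tfrac{3}{\sqrt2}B^{3/2}
\approx 0.439+0.63+0.09,
\]
which is already uncomfortably close to $1.1547$. This is the main obstacle. If the uniform bounds from Step 1 lose too much here, I would keep the exact $X$-dependence instead and treat $\varepsilon$ as a function of $(X,y)$. Since the term $\frac{3\lambda_n X}{8}A^2$ carries $\lambda_n X=(18-20/n)(n-2)/n$, I would bound this product sharply as $n\to\infty$ and check the small cases $n=3,4$ directly. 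Positivity of $Q_n$ then follows from the same estimates, since $F_n>0$ is proved with $\sqrt{4/3}-\bar\varepsilon\geqslant0$ verified on each subinterval first. The constant $1/141120$ is presumably the worst grid margin, so it should be the minimum recorded along the way.
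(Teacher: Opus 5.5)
\textbf{There is a genuine gap.} Your Step~1 reduction is too lossy. It turns the target into a false inequality: the $n$-independent majorant $\bar\varepsilon$ exceeds $\sqrt{4/3}\approx1.1547$ near $y=3$, so no partition or grid on $[5/2,3]$ can rescue it.

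At $y=3$ one has $B\approx0.1208$ and $\sqrt{\frac43B+\frac{27}{4}B^2}\approx0.51$ (not $0.439$). This gives $\bar\varepsilon(3)\approx0.51+0.63+0.09\approx1.23$. Even your own figures add up to $1.159>1.1547$.

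Replacing $\frac43$ by the $n\geqslant4$ bound $\kappa_n^2\leqslant1$ does not save it: $\sqrt{B+\frac{27}4B^2}\approx0.47$, so $\bar\varepsilon(3)\approx1.185$.

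Your fallback also misdiagnoses the difficulty. The bound $\lambda_nX\leqslant18$ is already sharp as $n\to\infty$. Moreover $B$, $X$ and $\lambda_nX$ all increase with $n$, while only $\kappa_n^2$ decreases, so the binding regime is large $n$. Setting aside $n=3,4$ and using $\kappa_n^2\leqslant1$, $\lambda_nX\leqslant18$ for the remaining $n$ fails exactly as above.

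\textbf{The missing idea is to bound $\kappa_n^2$ and $\lambda_nX$ jointly.} For $n\geqslant4$ one has $\kappa_n^2=\frac43-\frac{2X}3$ and $\lambda_n=8+10X$ with $X\in[\frac12,1)$. Hence the radicand
\[
R_X(b)=\Bigl(\frac43-\frac{2X}3\Bigr)b+\frac38(8+10X)Xb^2
\]
is convex in $X$. Also $R_1(b)-R_{1/2}(b)=b(\frac{69}{16}b-\frac13)\geqslant0$ once $b\geqslant\frac{16}{207}$. Therefore $R_X(b)\leqslant\frac23b+\frac{27}4b^2$. The worst case is $n\to\infty$, where $\kappa^2$ is effectively $\frac23$, not $1$ or $\frac43$.

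This gives $\varepsilon\leqslant P(b)=\sqrt{\frac23b+\frac{27}4b^2}+3\sqrt3\,b+\frac3{\sqrt2}b^{3/2}$ whenever $b\geqslant\max\{B,\frac1{182},\frac{16}{207}\}$. At $b=0.12077$ one gets $P(0.12077)<1.1397$, leaving a margin of only about $0.015$. The case $n=3$ is separate and easy, since $X=\frac13$ gives $\varepsilon<0.83$.

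\textbf{How the paper finishes near $y=3$.} No grid is needed. Put $z=3-y$.
\begin{itemize}
\item The affine bound on $C$ gives $B\leqslant0.12077-\frac6{125}z$.
\item The bound $P'\geqslant\frac{35}4$ then gives $Q_n>\frac3{200}+\frac{21}{50}z$.
\item With $U\leqslant\frac z{40}$, one gets $F_n>\frac{441}{2500}\bigl(z-\frac{31}{882}\bigr)^2+\frac1{141120}$.
\end{itemize}
That completed square, not a grid margin, is the source of the constant $\frac1{141120}$.

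A grid could replace this last step once $P$ is in place. However, the margin of $F$ near $y\approx2.97$ is only of order $10^{-4}$, so your endpoint-evaluation scheme would need a mesh of order $10^{-3}$ there.
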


\begin{proof}
Fix \(n\geqslant3\) and \(y\in[0,3]\). Write
\[
 B=B_{182,X}(y),\quad A=A_{182,X}(y),\quad
 \varepsilon=\varepsilon_{182,n}(y),\quad U=U_{182,X}(y).
\]

Since \(t=5/3+4/182\) and \(X\leqslant1\),
\begin{equation}\label{eq:uniform-U}
 U=\frac{4t(3-y)}{182}\left(\frac23+\frac X{182}\right)
       \leqslant\frac{3-y}{40}.
\end{equation}
By Lemma~\ref{lem:stability-slope} and the inequality \(C(9/364)<7.326\),
\begin{equation}\label{eq:uniform-B}
 B\leqslant\frac{7.326\,y}{182}+\frac3{8  \times 182^2}.
\end{equation}

We first consider the case \(n=3\). In this case, \(X=1/3\),
\(\kappa_3^2=4/3\), and \(3\lambda_3X/8=17/12\).
By \eqref{eq:uniform-B}, one has \(B,A<0.121\). Hence,
\[
 \begin{aligned}
 \varepsilon
 &\leqslant\sqrt{\frac43  \times 0.121 +\frac{17}{12}  \times 0.121^2}
             +3 \times 0.121+\frac{0.121^{3/2}}{\sqrt2}\\
 &<0.83.
 \end{aligned}
\]
This together with \eqref{eq:uniform-U} and \(\sqrt{4/3}>1.15\) yields
\[
 Q_3(y)>0.32,\quad
 F_3(y)>(0.32)^2-\frac3{40}
       =0.0274>\frac1{141120}.
\]

Now we consider the case \(n\geqslant4\). In this case,
\[
 \frac12\leqslant X<1,\quad
 \kappa_n^2=\frac43-\frac{2X}{3},\quad \lambda_n=8+10X.
\]
Define
\begin{equation}\label{eq:P-definition}
 P(b)=\sqrt{\frac23b+\frac{27}{4}b^2}
                 +3\sqrt3\,b+\frac3{\sqrt2}b^{3/2}.
\end{equation}
We claim that
\begin{equation}\label{eq:P-majorant}
 \varepsilon\leqslant P(b)
 \quad\text{whenever}\quad
 b\geqslant\max\left\{B,\frac1{182},\frac{16}{207}\right\}.
\end{equation}
In fact, we have \(A\leqslant b\), and the function
\[
 R_X(b)=\left(\frac43-\frac{2X}{3}\right)b
                    +\frac38(8+10X)Xb^2
\]
is convex in \(X\in[1/2,1]\). Moreover,
\[
 R_1(b)-R_{1/2}(b)
       =b\left(\frac{69}{16}b-\frac13\right)\geqslant0.
\]
It follows that
\[
 R_X(b)\leqslant R_1(b)=\frac23b+\frac{27}{4}b^2.
\]
Substitution into the definition of \(\varepsilon\)  proves \eqref{eq:P-majorant}.

If \(0\leqslant y\leqslant2\), by \eqref{eq:uniform-B} one has \(B,A<0.081\).
Using \(\kappa_n^2\leqslant1\) and \(3\lambda_nX/8\leqslant27/4\), we obtain
\[
 \begin{aligned}
 \varepsilon
 &\leqslant\sqrt{0.081+\frac{27}{4}  \times 0.081^2}
              +3\sqrt3 \times 0.081+\frac3{\sqrt2} \times 0.081^{3/2}\\
 &<0.83.
 \end{aligned}
\]
Thus, as in the case \(n=3\),
\[
 Q_n(y)>0.32,\quad F_n(y)>0.0274>\frac1{141120}.
\]

If \(2\leqslant y\leqslant5/2\), then $B,A<0.101$, and by
\eqref{eq:P-majorant},
\[
 \varepsilon\leqslant P(0.101)<0.97.
\]
Since \(U\leqslant1/40\), it follows that
\[
 Q_n(y)>0.18,\quad
 F_n(y)> 0.18^2-\frac1{40}
        =0.0074>\frac1{141120}.
\]

It remains to consider \(5/2\leqslant y\leqslant3\). Set
\[
 z=3-y,\quad b_*=\frac{12077}{100000},\quad
 b=b_*-\frac6{125}z.
\]
Then \(0\leqslant z\leqslant1/2\), and \eqref{eq:stability-affine} implies
\begin{align*}
 B
 &\leqslant\frac y{182}\left(7.326-\frac35z\right)
                             +\frac3{8  \times 182^2}\\
 &=\frac{3 \times 7.326}{182}+\frac3{8 \times 182^2}
        -\frac{7.326+(3/5)y}{182}z
 \leqslant b.
\end{align*}
Here we have used
\[
 \frac{3 \times 7.326}{182}+\frac3{8 \times 182^2}<b_*,
 \quad
 \frac{7.326+3/2}{182}>\frac6{125}.
\]
We also have
\[
 b\geqslant b_*-\frac3{125}=0.09677
      >\frac{19}{200}>\frac{16}{207}>\frac1{182}.
\]
Consequently, \eqref{eq:P-majorant} applies.

For \(v\geqslant19/200\), by differentiating \eqref{eq:P-definition} we get
\[
 \begin{aligned}
 P'(v)
 &=\frac{2/3+(27/2)v}{2\sqrt{(2/3)v+(27/4)v^2}}
                 +3\sqrt3+\frac9{2\sqrt2}\sqrt v\\
 &\geqslant\frac{9\sqrt3}{2}+\frac{9\sqrt{19}}{40}
 >\frac{35}{4}.
 \end{aligned}
\]
At \(b=b_*\), by a direct comparison of the squares of the nonnegative
terms, we have
\[
 \sqrt{\frac23b_*+\frac{27}{4}b_*^2}<0.42305,\quad
 3\sqrt3\,b_*<0.62754,\quad
 \frac3{\sqrt2}b_*^{3/2}<0.08904.
\]
In particular, \(P(b_*)<1.1397\). Hence,
\[
 \begin{aligned}
 \varepsilon
 &\leqslant P(b)
 =P(b_*)-\int_b^{b_*}P'(v)\,dv\\
 &\leqslant P(b_*)-\frac{35}{4}(b_*-b)
 <1.1397-\frac{21}{50}z.
 \end{aligned}
\]
Since \(\sqrt{4/3}>1.1547\), we conclude that
\begin{equation}\label{eq:positive-lower-bound}
 Q_n(y)>\frac3{200}+\frac{21}{50}z>0.
\end{equation}
Combining \eqref{eq:uniform-U} and
\eqref{eq:positive-lower-bound}, we obtain
\[
 \begin{aligned}
 F_n(y)
 &>\left(\frac3{200}+\frac{21}{50}z\right)^2-\frac z{40}\\
 &=\frac{441}{2500}\left(z-\frac{31}{882}\right)^2
                          +\frac1{141120}
 \geqslant\frac1{141120}.
 \end{aligned}
\]
This proves \eqref{eq:scalar-comparison} in all cases.
\end{proof}

Now we continue the proof of Theorem \ref{thm:main}.

Let \(p\) be the point chosen above, and set
\[
 W=\frac{|\Phi(h(p))|}{n\sqrt{n-2}}.
\]
The estimates in Step 2 and Lemma~\ref{lem:scalar-comparison} yield
\[
 W\geqslant Q_n(y)>0,\quad W^2\leqslant U_{182,X}(y)<Q_n(y)^2,
\]
which is impossible. Therefore  $g>{(n-2)}/{182}$, and hence
\[
 S_{\max}>\frac{2n}{3}+\frac{n-2}{182}.
 \qedhere
\]

\end{proof}

\appendix

\section{Normal directions to the Pauli orbit}
\label{app:pauli-normal}

In this appendix, we derive equations characterizing the normal space of the Pauli orbit at its standard representative.

Let
\[
 \mathcal V:=\bigl(\Sym_0^2(\RR^n)\bigr)^q,
 \quad
 c:=\sqrt{\frac n6},
\]
and set
\[
 P_1:=
 \begin{pmatrix}
 1&0\\
 0&-1
 \end{pmatrix},
 \quad
 P_2:=
 \begin{pmatrix}
 0&1\\
 1&0
 \end{pmatrix}.
\]
Consider the Pauli tuple
\[
 H=(H_1,\ldots,H_q)\in\mathcal V,
 \quad
 H_1=cP_1\oplus0,
 \quad
 H_2=cP_2\oplus0,
 \quad
 H_\mu=0\quad(\mu\geqslant3),
\]
and its orbit
\[
 E_{2n/3}:=\bigl(O(n)\times O(q)\bigr)\cdot H.
\]
For
\[
 d=(D_1,\ldots,D_q)\in\mathcal V,
\]
use the decomposition
\[
 \RR^n=\RR^2\oplus\RR^{n-2}
\]
to write
\[
 D_\alpha=
 \begin{pmatrix}
 B_\alpha&C_\alpha\\
 C_\alpha^T&E_\alpha
 \end{pmatrix}
\]
with
\begin{equation*}
 B_\alpha \in\Sym^2(\RR^2),\quad C_\alpha \in\RR^{2\times (n-2)},\quad E_\alpha\in\Sym^2(\RR^{n-2}).
 \end{equation*}

\begin{lemma}
$ d\perp T_HE_{2n/3}$ if and only if
\[
 \begin{cases}
 \langle B_1,P_2\rangle-\langle B_2,P_1\rangle=0,\\
 (D_1)_{1a}+(D_2)_{2a}=0,
 &a\geqslant3,\\
 (D_2)_{1a}-(D_1)_{2a}=0,
 &a\geqslant3,\\
 \langle B_\mu,P_1\rangle
 =\langle B_\mu,P_2\rangle=0,
 &\mu\geqslant3.
 \end{cases}
\]
\end{lemma}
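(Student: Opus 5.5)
The plan is to compute the tangent space $T_HE_{2n/3}$ explicitly as the image of the Lie algebra $\mathfrak{o}(n)\oplus\mathfrak{o}(q)$ under the infinitesimal action. Then $d\perp T_HE_{2n/3}$ amounts to $d$ being orthogonal to each generator.

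Differentiating $(P,Q)\cdot H$ at the identity along $(X,Y)$ with $X\in\mathfrak{o}(n)$, $Y\in\mathfrak{o}(q)$ skew gives the tangent vector
\[
\Big([X,H_\alpha]+\sum_\beta Y_{\alpha\beta}H_\beta\Big)_{\alpha=1}^q .
\]
So $T_HE_{2n/3}$ is spanned by two families of vectors: $([X,H_1],[X,H_2],0,\dots,0)$ with $X=e_i\wedge e_j$ skew, and the normal rotations $Y=E_{\alpha\beta}-E_{\beta\alpha}$.

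For each generator we compute the inner product $\sum_\alpha\langle D_\alpha,\cdot\rangle$ and read off conditions.

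\emph{Normal rotations.} Since $H_\mu=0$ for $\mu\geqslant3$, only rotations involving index $1$ or $2$ contribute.
\begin{itemize}
\item The $(1,2)$ rotation gives the tangent vector $(H_2,-H_1,0,\dots)$ up to sign. Orthogonality reads $c(\langle B_1,P_2\rangle-\langle B_2,P_1\rangle)=0$, which is the first equation.
\item The $(\mu,1)$ rotation gives the vector with $H_1$ in slot $\mu$, so orthogonality reads $\langle B_\mu,P_1\rangle=0$. Similarly the $(\mu,2)$ rotation gives $\langle B_\mu,P_2\rangle=0$. Together these are the fourth equation.
\end{itemize}

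\emph{Tangent rotations.} We split $X=e_i\wedge e_j$ into three cases.
\begin{itemize}
\item Both $i,j\geqslant3$: then $X$ commutes with $H_1,H_2$, so there is no condition.
\item $X=e_1\wedge e_2$ in the $2\times2$ block: the commutators with $P_1$ and $P_2$ are multiples of $P_2$ and $-P_1$ respectively. This reproduces the $(1,2)$ normal-rotation direction, so there is no new condition. It is consistent, since the $SO(2)$ rotation of the plane acts on the Pauli pair as a rotation by double angle.
\item $X=e_1\wedge e_a$ or $e_2\wedge e_a$ with $a\geqslant3$: this is the main computation.
\end{itemize}
For the last case we compute $[e_1\wedge e_a,P_1\oplus0]$ and $[e_1\wedge e_a,P_2\oplus0]$. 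These are symmetric matrices supported on entries $(1,a),(a,1)$, respectively $(2,a),(a,2)$, with coefficients $\pm1$. Pairing them with $D_1$ and $D_2$ gives the linear relation $(D_1)_{1a}+(D_2)_{2a}=0$, with signs to be checked carefully. The analogous computation for $e_2\wedge e_a$ yields $(D_2)_{1a}-(D_1)_{2a}=0$.

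Sign bookkeeping is the main obstacle. Using $u\wedge v=uv^T-vu^T$, we have $P_1e_1=e_1$ and $P_1e_2=-e_2$, while $P_2$ swaps $e_1$ and $e_2$. The check is that $[e_1\wedge e_a,H_1]=-c(e_1e_a^T+e_ae_1^T)$ and $[e_1\wedge e_a,H_2]=-c(e_2e_a^T+e_ae_2^T)$. Hence the inner product with $d$ is $-2c\,((D_1)_{1a}+(D_2)_{2a})$, as claimed.

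Since these generators span the tangent space, orthogonality to all of them is equivalent to the system, which proves both directions. Because the equations are homogeneous linear, it is enough to verify orthogonality on a spanning set.
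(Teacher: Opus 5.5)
Your proposal is correct and is essentially the paper's argument: both describe $T_HE_{2n/3}$ as the image of $\mathfrak{so}(n)\oplus\mathfrak{so}(q)$ under the infinitesimal action $(X,Y)\mapsto\bigl([X,H_\alpha]+\sum_\beta Y_{\alpha\beta}H_\beta\bigr)_\alpha$, and both read off the four conditions by pairing $d$ against that image. The only difference is cosmetic: you test against the basis elements $e_i\wedge e_j$ and $E_{\alpha\beta}-E_{\beta\alpha}$ one at a time, while the paper writes a general $X$ in block form with parameters $\omega,u,v,Z$; your signs, e.g.\ $[e_1\wedge e_a,H_1]=-c(e_1e_a^T+e_ae_1^T)$, agree with the paper's coefficients.
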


\begin{proof}
The action of $O(n)\times O(q)$ on $\mathcal V$ is
\[
 (P,Q)\cdot(A_1,\ldots,A_q)
 =
 \bigg(
 \sum_\beta Q_{1\beta}PA_\beta P^T,\ldots,
 \sum_\beta Q_{q\beta}PA_\beta P^T
 \bigg).
\]
For
\[
 X\in\mathfrak{so}(n),
 \quad
 Y\in\mathfrak{so}(q),
 \quad
 P(s)=e^{sX},
 \quad
 Q(s)=e^{sY},
\]
Differentiating at $s=0$, we get
\[
 \left.
 \frac{\mathrm d}{\mathrm ds}
 \right|_{s=0}
 \bigl(P(s),Q(s)\bigr)\cdot H
 =
 \bigg(
 [X,H_\alpha]+\sum_\beta Y_{\alpha\beta}H_\beta
 \bigg)_{\alpha=1}^q.
\]
We can prove this by computing differentiation of each component of $(P(s),Q(s))\cdot H$.
Consequently,
\[
 T_HE_{2n/3}
 =
 \bigg\{
 \bigg(
 [X,H_\alpha]+\sum_\beta Y_{\alpha\beta}H_\beta
 \bigg)_{\alpha=1}^q
 :
 X\in\mathfrak{so}(n),\
 Y\in\mathfrak{so}(q)
 \bigg\}.
\]

With
\[
 \langle d,\widetilde d\rangle
 :=
 \sum_\alpha\operatorname{tr}
 \bigl(D_\alpha^T\widetilde D_\alpha\bigr),
\]
we have that
\[
 d\perp T_HE_{2n/3}
\]
if and only if
\begin{equation*}
 \sum_\alpha
 \bigg\langle
 D_\alpha,
 [X,H_\alpha]+\sum_\beta Y_{\alpha\beta}H_\beta
 \bigg\rangle
 =0
\end{equation*}
for every
$X\in\mathfrak{so}(n)$ and $Y\in\mathfrak{so}(q)$.

\medskip
\medskip

Since $Y_{\beta\alpha}=-Y_{\alpha\beta}$,
\[
 \begin{aligned}
 \sum_{\alpha,\beta}
 Y_{\alpha\beta}\langle D_\alpha,H_\beta\rangle
 =
 \sum_{\alpha<\beta}Y_{\alpha\beta}
 \bigl(
 \langle D_\alpha,H_\beta\rangle
 -\langle D_\beta,H_\alpha\rangle
 \bigr).
 \end{aligned}
\]
Thus this expression vanishes for every
$Y\in\mathfrak{so}(q)$ if and only if
\[
 \langle D_\alpha,H_\beta\rangle
 =
 \langle D_\beta,H_\alpha\rangle
 \quad(\alpha<\beta).
\]
Using
\[
 H_1=cP_1\oplus0,
 \quad
 H_2=cP_2\oplus0,
 \quad
 H_\mu=0\quad(\mu\geqslant3),
\]
these conditions become
\[
 \langle B_1,P_2\rangle-\langle B_2,P_1\rangle=0,
\]
\[
 \langle B_\mu,P_1\rangle=0,
 \quad
 \langle B_\mu,P_2\rangle=0
 \quad(\mu\geqslant3).
\]
All conditions with $\alpha,\beta\geqslant3$ are automatic.

\medskip

\medskip

Put $m=n-2$ and write an arbitrary
$X\in\mathfrak{so}(n)$ as
\[
 X=
 \begin{pmatrix}
 \omega J&U\\
 -U^T&Z
 \end{pmatrix},
 \quad
 J=
 \begin{pmatrix}
 0&1\\
 -1&0
 \end{pmatrix},
 \quad
 U=
 \begin{pmatrix}
 u^T\\
 v^T
 \end{pmatrix},
\]
where
\[
 \omega\in\RR,
 \quad
 u=(u_3,\ldots,u_n)^T,\quad
 v=(v_3,\ldots,v_n)^T\in\RR^m,
 \quad
 Z\in\mathfrak{so}(m).
\]
Since $H_1$ and $H_2$ vanish on $\RR^m$, the block $Z$ does not
contribute to $[X,H_\alpha]$.  Moreover,
\[
 [J,P_1]=-2P_2,
 \quad
 [J,P_2]=2P_1,
\]
and
\[
 [X,H_i]
 =
 c
 \begin{pmatrix}
 \omega[J,P_i]&-P_iU\\
 -U^TP_i&0
 \end{pmatrix}
 \quad(i=1,2).
\]
Hence,
\[
 \begin{aligned}
 \sum_{\alpha=1}^q
 \langle D_\alpha,[X,H_\alpha]\rangle
 =2c\bigg[
 &-\omega
 \bigl(
 \langle B_1,P_2\rangle-\langle B_2,P_1\rangle
 \bigr)\\
 &-\sum_{a=3}^n
 u_a\bigl((D_1)_{1a}+(D_2)_{2a}\bigr)\\
 &+\sum_{a=3}^n
 v_a\bigl((D_1)_{2a}-(D_2)_{1a}\bigr)
 \bigg].
 \end{aligned}
\]
Because $\omega$, $u$, and $v$ are arbitrary, this expression vanishes for
every $X\in\mathfrak{so}(n)$ if and only if
\[
 \langle B_1,P_2\rangle-\langle B_2,P_1\rangle=0,
\]
\[
 (D_1)_{1a}+(D_2)_{2a}=0,
 \quad
 (D_2)_{1a}-(D_1)_{2a}=0
 \quad(a\geqslant3).
\]

Since $X$ and $Y$ vary independently, the conditions obtained above are
jointly equivalent to the vanishing of the inner product with every element
of $T_HE_{2n/3}$.  Hence they are equivalent to
$d\perp T_HE_{2n/3}$, as required.
\end{proof}

\section{Normal equations at the Pauli orbit}
\label{app:normal-equations}

This appendix gives an explicit parametrization of the normal space at a
Pauli point and derives its decomposition into the radial direction and six
mutually orthogonal subspaces.

\medskip
\noindent\textbf{B.1 Notation and normal equations}

Let $n\geqslant 3$, $q\geqslant 2$, and $m=n-2$. Write $\Sym^2(\RR^k)$ for the
space of real symmetric $k\times k$ matrices and $\Sym_0^2(\RR^k)$ for
its trace-free subspace. Equip
\[
 \mathcal{V}=\bigl(\Sym_0^2(\RR^n)\bigr)^q
 \quad\text{with}\quad
 \ip{d}{\widetilde d}
 =\sum_{\alpha=1}^q\tr(D_\alpha\widetilde D_\alpha),
 \quad d=(D_\alpha),\quad\widetilde d=(\widetilde D_\alpha).
\]

Relative to $\RR^n=\RR^2\oplus\RR^m$, set
\[
 P_0=I_2,\quad
 P_1=\begin{pmatrix}1&0\\0&-1\end{pmatrix},\quad
 P_2=\begin{pmatrix}0&1\\1&0\end{pmatrix},
\]
and fix the Pauli point
\begin{equation}\label{eq:app-pauli}
 H_1=cP_1\oplus 0,\quad
 H_2=cP_2\oplus 0,\quad
 H_\mu=0\quad(3\leqslant\mu\leqslant q),\quad c=\sqrt{n/6}.
\end{equation}

Let 
\[
 \NH=(T_H E_{2n/3} )^\perp\subset\mathcal{V},
 \quad e_0=\frac{H}{\norm{H}}.
\]
We call the indices $1,2$ \emph{active} and the
indices $3,\ldots,q$ \emph{inactive}.

For $d=(D_\alpha)\in\mathcal{V}$, write
\begin{equation*}
 D_\alpha=
 \begin{pmatrix}B_\alpha&C_\alpha\\C_\alpha^T&E_\alpha\end{pmatrix},
\end{equation*}
where
\begin{equation*}
 B_\alpha \in\Sym^2(\RR^2),\quad C_\alpha \in\RR^{2\times m},\quad E_\alpha\in\Sym^2(\RR^m).
 \end{equation*}

In particular, $\tr B_\alpha+\tr E_\alpha=0$. By Appendix \ref{app:pauli-normal}, the normal equations are
\begin{equation}\label{eq:app-normal}
 \begin{cases}
 \ip{B_1}{P_2}=\ip{B_2}{P_1},\\
 (C_1)_{1j}+(C_2)_{2j}=0,&1\leqslant j\leqslant m,\\
 (C_2)_{1j}-(C_1)_{2j}=0,&1\leqslant j\leqslant m,\\
 \ip{B_\mu}{P_1}=\ip{B_\mu}{P_2}=0,&3\leqslant\mu\leqslant q.
 \end{cases}
\end{equation}

\medskip
\noindent\textbf{B.2 Explicit solution}

For $\alpha=1,2$, decompose $E_\alpha$ into its trace-free and scalar parts as
\[
E_\alpha
 =E_\alpha^0+\frac{\tr E_\alpha}{m}I_m.
\]

\begin{proposition}
\label{prop:app-parametrization}
A tuple $d=(D_\alpha)\in\mathcal{V}$ belongs to $\NH$ if and only if
its active components have the form
\begin{equation}\label{eq:app-active}
 \begin{aligned}
 D_1&=
 \begin{pmatrix}
 r_1P_0+x_1P_1+sP_2&C_1\\
 C_1^T&E_1^0-\dfrac{2r_1}{m}I_m
 \end{pmatrix},\\[2mm]
 D_2&=
 \begin{pmatrix}
 r_2P_0+sP_1+y_2P_2&C_2\\
 C_2^T&E_2^0-\dfrac{2r_2}{m}I_m
 \end{pmatrix},
 \end{aligned}
\end{equation}
where
\begin{equation}\label{eq:app-active-mixed}
 C_1=\begin{pmatrix}u^T\\v^T\end{pmatrix},\quad
 C_2=\begin{pmatrix}v^T\\-u^T\end{pmatrix},
 \quad u,v\in\RR^m,
\end{equation}
and its inactive components have the form
\begin{equation}\label{eq:app-inactive}
 D_\mu=
 \begin{pmatrix}
 -\dfrac12(\tr E_\mu)I_2&C_\mu\\
 C_\mu^T&E_\mu
 \end{pmatrix},\quad 3\leqslant\mu\leqslant q.
\end{equation}
The parameters
\[
 \begin{gathered}
 r_1,r_2,x_1,y_2,s\in\RR,\\
 E_1^0,E_2^0\in\Sym_0^2(\RR^m),\\
  u,v\in\RR^m,\\
 E_\mu\in\Sym^2(\RR^m),\quad
 C_\mu\in\RR^{2\times m}\quad(3\leqslant\mu\leqslant q)
 \end{gathered}
\]
are independent and are uniquely determined by $d$.
\end{proposition}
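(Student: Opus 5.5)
The statement to prove is Proposition~\ref{prop:app-parametrization}. The plan is to solve the linear system \eqref{eq:app-normal} directly inside $\mathcal V$, using the trace-free constraint $\tr B_\alpha+\tr E_\alpha=0$. One remark first: for the inactive block \eqref{eq:app-inactive} to be consistent with \eqref{eq:app-normal}, the inactive block $B_\mu$ must be orthogonal to $P_1,P_2$ only. Together with trace-freeness, that forces $B_\mu=-\tfrac12(\tr E_\mu)I_2$. I will read the statement in this way.

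\emph{Inactive components.} For $\mu\geqslant 3$, the space $\Sym^2(\RR^2)$ has the orthogonal basis $P_0,P_1,P_2$. The conditions $\ip{B_\mu}{P_1}=\ip{B_\mu}{P_2}=0$ say that $B_\mu=\beta_\mu P_0$. Trace-freeness of $D_\mu$ gives $2\beta_\mu+\tr E_\mu=0$. The blocks $C_\mu$ and $E_\mu$ are unconstrained. This yields \eqref{eq:app-inactive}, with $(C_\mu,E_\mu)$ free and uniquely read off from $D_\mu$.

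\emph{Active components.} Expand $B_1=r_1P_0+x_1P_1+s_1P_2$ and $B_2=r_2P_0+s_2P_1+y_2P_2$. Since $\ip{P_i}{P_j}=2\delta_{ij}$ for $i,j\in\{0,1,2\}$, the first equation of \eqref{eq:app-normal} reads $2s_1=2s_2$, so there is a single parameter $s$. Trace-freeness gives $\tr E_\alpha=-\tr B_\alpha=-2r_\alpha$, which produces the scalar parts $-\tfrac{2r_\alpha}{m}I_m$. The trace-free parts $E^0_\alpha$ remain free.

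For the off-diagonal blocks, write $C_1=\binom{u^T}{v^T}$ with $u,v\in\RR^m$. The second equation of \eqref{eq:app-normal} forces the second row of $C_2$ to be $-u^T$. The third equation forces the first row of $C_2$ to be $v^T$. This is exactly \eqref{eq:app-active-mixed}.

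\emph{Converse and uniqueness.} Conversely, every tuple of the displayed form is symmetric and trace-free, and it satisfies all equations of \eqref{eq:app-normal}; this is checked by substitution. By Appendix~\ref{app:pauli-normal}, such a tuple therefore lies in $\NH$. For uniqueness of the parameters, note that the coefficients $r_\alpha,x_1,s,y_2$ are the coordinates of $B_\alpha$ in the orthogonal basis $P_0,P_1,P_2$. The vectors $u,v$ are the rows of $C_1$, the blocks $E^0_\alpha$ are the trace-free parts of $E_\alpha$, and $C_\mu,E_\mu$ are blocks of $D_\mu$. So the parametrization map is linear and injective. As a consistency check, the parameter count $5+2(\tfrac{m(m+1)}2-1)+2m+(q-2)(2m+\tfrac{m(m+1)}2)$ agrees with $\dim\NH$ computed in Lemma~\ref{lem:spectrum}.

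No step here is a genuine obstacle; the argument is bookkeeping. The main points of care are the sign conventions in the $C$-block relations and the normalization $\ip{P_i}{P_j}=2\delta_{ij}$.
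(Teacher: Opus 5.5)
Your proof is correct and follows the paper's argument essentially step by step. You expand $B_1,B_2,B_\mu$ in the orthogonal basis $P_0,P_1,P_2$, use trace-freeness to fix the scalar parts of the lower blocks, read off the rows of $C_2$ from the two mixed-block equations, and check the converse and uniqueness by substitution. The opening remark about the inactive block is unnecessary, since it only restates what the normal equations and trace-freeness already give, and the dimension count is an extra consistency check that the proof does not need.
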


\begin{proof}
The matrices $P_0,P_1,P_2$ form an orthogonal basis of
$\Sym^2(\RR^2)$, with $\ip{P_i}{P_j}=2\delta_{ij}$ for
$0\leqslant i,j\leqslant 2$. Thus the first equation in \eqref{eq:app-normal}
identifies the $P_2$ coefficient of $B_1$ with the $P_1$ coefficient of
$B_2$. Denoting their common value by $s$, we obtain
\[
 B_1=r_1P_0+x_1P_1+sP_2,\quad
 B_2=r_2P_0+sP_1+y_2P_2.
\]

Since $\tr D_\alpha=0$, we have
\[
 E_\alpha
 =E_\alpha^0+\frac{\tr E_\alpha}{m}I_m
 =E_\alpha^0-\frac{2r_\alpha}{m}I_m.
\]

Writing the rows of $C_1$ as $u^T,v^T$, the second and third equations
in \eqref{eq:app-normal} force the rows of $C_2$ to be $v^T,-u^T$.
This proves \eqref{eq:app-active} and \eqref{eq:app-active-mixed}.

For an inactive index $\mu$, the last equation in
\eqref{eq:app-normal} implies $B_\mu=b_\mu I_2$. By the  trace-free
condition,
\[
 2b_\mu+\tr E_\mu=0,
 \quad b_\mu=-\frac12\tr E_\mu,
\]
which proves \eqref{eq:app-inactive}. No equation restricts
$E_\mu\in\Sym^2(\RR^m)$ or $C_\mu\in\RR^{2\times m}$.

Conversely, the displayed formulas define trace-free symmetric matrices
and satisfy every equation in \eqref{eq:app-normal}. Uniqueness follows
from the uniqueness of the coefficients in the Pauli basis, of the
trace decomposition, and of the block entries.
\end{proof}

\medskip
\noindent\textbf{B.3 The radial direction and the six normal blocks}

Introduce
\begin{equation}\label{eq:app-radial-coordinates}
 a=\frac{x_1+y_2}{2},\quad
 t=\frac{x_1-y_2}{2},
 \quad x_1=a+t,\quad y_2=a-t.
\end{equation}
The trace-free parts of the active upper blocks then satisfy
\begin{equation}\label{eq:app-radial-split}
 (B_1-r_1P_0,\,B_2-r_2P_0)
 =a(P_1,P_2)+t(P_1,-P_2)+s(P_2,P_1).
\end{equation}
After embedding these $2\times2$ matrices into the active components of
$\mathcal{V}$, the $a$ term, i.e., the embedding of the first term on the right hand side of \eqref{eq:app-radial-split}, is $(a/c)H$, and hence spans $\RR e_0$.

Using \eqref{eq:app-radial-coordinates} in
Proposition~\ref{prop:app-parametrization}, define $\V{1},\ldots,\V{6}$
by allowing precisely the parameters in the middle column below to vary
and setting all other parameters to zero:
\bigskip
\begin{center}
\begin{tabular}{@{}lll@{}}
\toprule
Subspace & Free parameters & Dimension\\
\midrule
$\V{1}$ & $E_1^0,E_2^0\in\Sym_0^2(\RR^m)$ & $n(n-3)$\\[1mm]
$\V{2}$ & $u,v\in\RR^m$ & $2(n-2)$\\[1mm]
$\V{3}$ & $r_1,r_2\in\RR$ & $2$\\[1mm]
$\V{4}$ & $t,s\in\RR$ & $2$\\[1mm]
$\V{5}$ & $(E_\mu)_{\mu=3}^q\in\bigl(\Sym^2(\RR^m)\bigr)^{q-2}$
 & $\dfrac{(q-2)(n-1)(n-2)}{2}$\\[2mm]
$\V{6}$ & $(C_\mu)_{\mu=3}^q\in\bigl(\RR^{2\times m}\bigr)^{q-2}$
 & $2(q-2)(n-2)$\\
\bottomrule
\end{tabular}
\end{center}
 
\bigskip

Thus $\V{1}$ consists of active trace-free lower blocks, $\V{2}$ of
active mixed blocks, and $\V{3}$ of active scalar upper blocks with
their trace-compensating lower blocks. The space $\V{4}$ is spanned by
the two nonradial Pauli directions in \eqref{eq:app-radial-split} (i.e., two embeddings of $(P_1,-P_2)$ and $(P_2, P_1)$).
Finally, $\V{5}$ and $\V{6}$ consist of inactive lower blocks (with the
required scalar upper blocks) and inactive mixed blocks, respectively.

\begin{corollary}
$\NH$ admits a unique orthogonal decomposition
\begin{equation}\label{eq:app-decomposition}
 \NH=\RR e_0\orthsum\V{1}\orthsum\V{2}\orthsum\V{3}
 \orthsum\V{4}\orthsum\V{5}\orthsum\V{6}.
\end{equation}
More precisely, the subspaces supported on active and inactive indices
satisfy
\begin{equation*}
 \begin{aligned}
 \NH^{\mathrm{active}}
 &=\RR e_0\orthsum\V{1}\orthsum\V{2}\orthsum\V{3}\orthsum\V{4},\\
 \NH^{\mathrm{inactive}}&=\V{5}\orthsum\V{6}.
 \end{aligned}
\end{equation*}
\end{corollary}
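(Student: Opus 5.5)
The plan is to show three things from Proposition~\ref{prop:app-parametrization}: the seven subspaces $\RR e_0,\V{1},\ldots,\V{6}$ are contained in $\NH$, they are pairwise orthogonal, and their sum is all of $\NH$. Uniqueness then follows from orthogonality, since an orthogonal direct sum determines its components as orthogonal projections.

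\textbf{Spanning and independence.} By Proposition~\ref{prop:app-parametrization}, every $d\in\NH$ is uniquely described by the parameters $r_1,r_2,x_1,y_2,s,E_1^0,E_2^0,u,v,(E_\mu),(C_\mu)$. Each parameter enters $d$ linearly, and these parameters range independently over a vector space. After the invertible change of variables \eqref{eq:app-radial-coordinates}, $(x_1,y_2)\leftrightarrow(a,t)$, the map from the parameter space to $\NH$ is a linear isomorphism. Grouping the parameters as $\{a\},\{E_1^0,E_2^0\},\{u,v\},\{r_1,r_2\},\{t,s\},\{E_\mu\},\{C_\mu\}$ then writes $\NH$ as the direct sum of the images of these groups. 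Those images are exactly $\RR e_0,\V{1},\ldots,\V{6}$, where the $a$-image is $\RR e_0$ by the remark after \eqref{eq:app-radial-split}. The dimension count in the table confirms this, and it matches the count $\dim\NH=q(\frac{n(n+1)}2-1)-(2n+2q-7)$ given in the proof of Lemma~\ref{lem:spectrum}.

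\textbf{Orthogonality.} The inner product is $\sum_\alpha\tr(D_\alpha\widetilde D_\alpha)$. For block matrices this splits as
\[
\tr(B\widetilde B)+2\tr(C\widetilde C^T)+\tr(E\widetilde E).
\]
\begin{enumerate}[(i)]
\item Active and inactive pieces are orthogonal because their supports in the index $\alpha$ are disjoint. This gives the two-line refinement.
\item Among inactive pieces, $\V{5}$ has $C_\mu=0$ and $\V{6}$ has $B_\mu=E_\mu=0$, so they are orthogonal.
\item Among active pieces, $\V{2}$ lives only in the $C$ blocks, so it is orthogonal to everything else.
\item $\V{1}$ has trace-free lower blocks and zero upper blocks. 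The other active spaces have lower blocks that are scalar multiples of $I_m$ (from $\V{3}$) or zero. Since $\tr(E^0 I_m)=0$, $\V{1}$ is orthogonal to them.
\item For $\RR e_0$, $\V{3}$ and $\V{4}$, the upper $2\times2$ blocks lie respectively in $\mathrm{span}(P_1,P_2)$ via $(P_1,P_2)$, in $P_0$ multiples, and in $\mathrm{span}$ of $(P_1,-P_2)$ and $(P_2,P_1)$. Orthogonality follows from $\ip{P_i}{P_j}=2\delta_{ij}$ together with
\[
\ip{(P_1,P_2)}{(P_1,-P_2)}=2-2=0,\qquad \ip{(P_1,P_2)}{(P_2,P_1)}=0,\qquad \ip{(P_1,-P_2)}{(P_2,P_1)}=0.
\]
\item The lower blocks in $\V{3}$ are scalar, and they pair only with each other, so they cause no cross terms.
\end{enumerate}

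The only step needing care is the last item, in particular checking that the radial direction $e_0$ is orthogonal to the two nonradial Pauli directions in $\V{4}$. This reduces to the sign computations displayed above.
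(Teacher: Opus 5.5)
Your proposal is correct and follows essentially the same route as the paper. The parametrization of Proposition~\ref{prop:app-parametrization}, together with the invertible change of variables $(x_1,y_2)\leftrightarrow(a,t)$, gives spanning and directness; orthogonality then comes from the blockwise splitting of the inner product, the relations $\ip{P_i}{P_j}=2\delta_{ij}$, trace-freeness of $E_\alpha^0$, and disjointness of tuple indices, which your case list spells out slightly more explicitly than the paper does.
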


\begin{proof}
The parametrization and the invertible change of variables
\eqref{eq:app-radial-coordinates} prove that the indicated subspaces
span $\NH$ and that the resulting sum is direct. In particular, for
each inactive component,
\[
 D_\mu=
 \begin{pmatrix}
 -\dfrac12(\tr E_\mu)I_2&0\\0&E_\mu
 \end{pmatrix}
 +\begin{pmatrix}0&C_\mu\\C_\mu^T&0\end{pmatrix},
\]
where the two summands are the $\mu$ components of the $\V{5}$ and
$\V{6}$ parts, respectively.

To verify orthogonality, observe that for symmetric block matrices
\[
 \ip{D}{\widetilde D}
 =\tr(B\widetilde B)
  +2\tr(C^T\widetilde C)
  +\tr(E\widetilde E).
\]
Consequently, diagonal blocks are orthogonal to mixed blocks, and
trace-free lower blocks are orthogonal to scalar lower blocks. The
relations $\ip{P_i}{P_j}=2\delta_{ij}$ separate the scalar upper
blocks from the Pauli directions. Within the remaining active upper
blocks,
\[
 \begin{aligned}
 \ip{(P_1,P_2)}{(P_1,-P_2)}&=2-2=0,\\
 \ip{(P_1,P_2)}{(P_2,P_1)}&=0.
 \end{aligned}
\]
Finally, tuple components with distinct indices are orthogonal.
These observations prove all the asserted orthogonality relations.
\end{proof}

\begin{remark}
When $n=3$, one has $m=1$ and $\Sym_0^2(\RR)=\{0\}$, so
$\V{1}=\{0\}$. When $q=2$, the inactive families are empty and
$\V{5}=\V{6}=\{0\}$. The formulas above include both cases.
\end{remark}

\section{Direct computation of the normal quadratic form}
\label{app:direct-normal-spectrum}

Throughout this appendix, $H$ denotes the Pauli tuple in
\eqref{eq:app-pauli}, $c^2=n/6$, and $L:=D\Phi_H.$
We use the coordinates and the orthogonal decomposition of $\NH$ introduced
in \eqref{eq:app-active}--\eqref{eq:app-decomposition}.  The computation has
three steps.  We first list the nonzero components of $L$, then compute
$|Ld|^2$ on each of the seven normal summands, and finally verify that the
images of distinct summands are pairwise orthogonal.  These three steps give
the norm decomposition required in Lemma~\ref{lem:spectrum}.

For fixed $\alpha,k,l$, write
\[
 (Ld)_{\alpha;kl}:=
 \bigl((Ld)_{\alpha ijkl}\bigr)_{1\leqslant i,j\leqslant n}.
\]
Thus $(Ld)_{\alpha;kl}$ is the matrix formed by the first two tangent
indices.  Since $Ld$ is antisymmetric in $k,l$,
\begin{equation}\label{eq:ds-ordered-norm}
 |Ld|^2=2\sum_\alpha\sum_{k<l}|(Ld)_{\alpha;kl}|^2.
\end{equation}

\par\medskip
\noindent\textbf{C.1 The nonzero components of the linearization}

Write $H=(H_\gamma)\in\mathcal V$ and $d=(D_\gamma)\in\mathcal V$. Since
for $a=(A_\alpha)\in\mathcal V$ with $A_\alpha=(a^\alpha_{ij})$,
\[
G(a)_{ijkl}
=\sum_{\gamma=1}^{q}
\bigl(a^\gamma_{ik}a^\gamma_{jl}-a^\gamma_{il}a^\gamma_{jk}\bigr),
\quad
R^\perp(a)_{\alpha\beta kl}=[A_\alpha,A_\beta]_{kl},
\]
one has
\begin{equation*}
\begin{aligned}
\bigl((DG_H)[d]\bigr)_{ijkl}
&=\sum_{\gamma=1}^{q}
\Bigl((D_\gamma)_{ik}(H_\gamma)_{jl}
+(H_\gamma)_{ik}(D_\gamma)_{jl}\\
&\qquad
-(D_\gamma)_{il}(H_\gamma)_{jk}
-(H_\gamma)_{il}(D_\gamma)_{jk}\Bigr),\\[2mm]
\bigl((DR^\perp_H)[d]\bigr)_{\alpha\beta kl}
&=[D_\alpha,H_\beta]_{kl}+[H_\alpha,D_\beta]_{kl}.
\end{aligned}
\end{equation*}

Set
\[
 K(d):=Ld-\Phiv^0(d).
\]
Using the preceding component formulas and substituting
\eqref{eq:app-active}--\eqref{eq:app-inactive} into
\eqref{eq:normal-linearization}, we obtain the following complete
list of the nonzero components of $K(d)$.

\smallskip
\noindent\textit{The active pair $(k,l)=(1,2)$.}
\begin{equation}\label{eq:ds-components-12}
 \begin{aligned}
 K(d)_{1;12}
 &=c^2\begin{pmatrix}
 -2r_2I_2+2sP_1-(18a+2t)P_2&0\\0&-2E_2
 \end{pmatrix},\\
 K(d)_{2;12}
 &=c^2\begin{pmatrix}
 2r_1I_2+(18a-2t)P_1-2sP_2&0\\0&2E_1
 \end{pmatrix},\\
 K(d)_{\mu;12}
 &=2c^2\begin{pmatrix}
 0&P_1P_2C_\mu\\(P_1P_2C_\mu)^T&0
 \end{pmatrix}
 \quad(\mu\geqslant3).
 \end{aligned}
\end{equation}

\smallskip
\noindent\textit{The mixed pairs $k\leqslant2<l$.}
For $1\leqslant i,j,k\leqslant2$, $1\leqslant p,r\leqslant m$, and
$\alpha=1,2$,
\begin{equation}\label{eq:ds-components-mixed}
 \begin{aligned}
 K(d)_{\alpha,j,r+2,i,p+2}
 &=c^2\sum_{\beta=1}^2
 (P_\alpha P_\beta)_{ji}(E_\beta)_{rp},\\
 K(d)_{\mu,i,j,k,p+2}
 &=c^2\sum_{\beta=1}^2
 (P_\beta C_\mu)_{kp}(P_\beta)_{ij}.
 \end{aligned}
\end{equation}
The entries obtained by symmetry in the first two tangent indices and
antisymmetry in the last two are understood.  All other entries of $K(d)$
vanish.  In particular,
\begin{equation}\label{eq:ds-components-lower}
 (Ld)_{\alpha;kl}=\Phiv^0(d)_{\alpha;kl}
 \quad(3\leqslant k<l\leqslant n).
\end{equation}

We present the proof of the first line of \eqref{eq:ds-components-12} as an example.
Let $J=P_1P_2$.  The identities
\[
 P_1^2=P_2^2=I_2,\quad
 P_2P_1=-P_1P_2,\quad
 C_2=JC_1
\]
imply
\[
 [P_1,J]=2P_2,\quad
 [P_2,J]=-2P_1,\quad
 P_2C_1-P_1C_2=0.
\]
The first component in
\eqref{eq:ds-components-12} can be written as
\[
 K(d)_{1;12}
 =
 c^2
 \begin{pmatrix}
 -2[B_1,J]-2B_2-12aP_2
   &2JC_1-2C_2\\
 (2JC_1-2C_2)^T
   &-2E_2
 \end{pmatrix}.
\]
Since $C_2=JC_1$, the off-diagonal blocks vanish.  Moreover,
\[
 -2[B_1,J]-2B_2-12aP_2
 =
 -2r_2I_2+2sP_1-(18a+2t)P_2,
\]
which gives the first line of
\eqref{eq:ds-components-12}.

\par\medskip
\noindent\textbf{C.2 The squared norm on each normal summand}

We divide the sum in \eqref{eq:ds-ordered-norm} into three parts:
\[
 \begin{aligned}
 \mathcal E_{12}(d)
 &:=2\sum_\alpha |(Ld)_{\alpha;12}|^2,\\
 \mathcal E_{\mathrm{mix}}(d)
 &:=2\sum_\alpha\sum_{k\leqslant2<l}
       |(Ld)_{\alpha;kl}|^2,\\
 \mathcal E_{\mathrm{low}}(d)
 &:=2\sum_\alpha\sum_{3\leqslant k<l}
       |(Ld)_{\alpha;kl}|^2.
 \end{aligned}
\]
Thus,
\[
 |Ld|^2
 =\mathcal E_{12}(d)+\mathcal E_{\mathrm{mix}}(d)
  +\mathcal E_{\mathrm{low}}(d).
\]

For $d\in\V{2}$ or $d\in\V{5}$, by \eqref{eq:ds-components-12} and \eqref{eq:ds-components-mixed}, one has
$K(d)=0$. Therefore, Lemma~\ref{lem:flat-ricci-norm} implies
\begin{equation*}
 |Ld|^2=4n|d|^2
 \quad(d\in\V{2}\text{ or }d\in\V{5}).
\end{equation*}

For each of the remaining five summands, the following table records the
three quotients
\[
 \frac{\mathcal E_{12}(d)}{|d|^2},
 \qquad
 \frac{\mathcal E_{\mathrm{mix}}(d)}{|d|^2},
 \qquad
 \frac{\mathcal E_{\mathrm{low}}(d)}{|d|^2}.
\]
Each entry already includes the factor $2$ in
\eqref{eq:ds-ordered-norm}.
\begin{equation}\label{eq:ds-contraction-table}
 \begin{array}{c|ccc}
 &\mathcal E_{12}/|d|^2
 &\mathcal E_{\mathrm{mix}}/|d|^2
 &\mathcal E_{\mathrm{low}}/|d|^2\\ \hline
 \RR e_0&2(2-18c^2)^2&4m&0\\[1mm]
 \V{1}&8c^4&8\{(c^2-1)^2+c^4\}&4m\\[1mm]
 \V{3}&8c^4&\dfrac4n\{(n-2c^2)^2+4c^4\}&0\\[2mm]
 \V{4}&8(1-c^2)^2&4m&0\\[1mm]
 \V{6}&2(2c^2-1)^2&4(c^2-1)^2+2m+4&2(m-1)
 \end{array}
\end{equation}

\bigskip

A row is ignored when the corresponding summand is zero-dimensional.  The
normalizations used in the table are
\[
 \begin{array}{c|c}
 d\in\RR e_0&|d|^2=4a^2\\[1mm]
 d\in\V{1}&|d|^2=|E_1^0|^2+|E_2^0|^2\\[1mm]
 d\in\V{3}&|d|^2=\dfrac{2n}{m}(r_1^2+r_2^2)\\[2mm]
 d\in\V{4}&|d|^2=4(t^2+s^2)\\[1mm]
 d\in\V{6}&|d|^2=2\displaystyle\sum_{\mu\geqslant3}|C_\mu|^2.
 \end{array}
\]

We indicate the contractions that are not immediate from orthogonality of
the blocks.  

For the radial direction,
\[
 \begin{aligned}
 (Ld)_{1;12}&=(2-18c^2)a(P_2\oplus0),\\
 (Ld)_{2;12}&=-(2-18c^2)a(P_1\oplus0),\\
 \sum_\alpha\sum_{k\leqslant2<l}|(Ld)_{\alpha;kl}|^2
 &=8ma^2.
 \end{aligned}
\]

For $\V{1}$ and $\V{4}$, one uses
$\ip{P_\alpha}{P_\beta}=2\delta_{\alpha\beta}$ and
$\tr E_\alpha^0=0$. The lower contribution for $\V{1}$ is the
calculation in Lemma~\ref{lem:flat-ricci-norm}, restricted to the lower
$m\times m$ block.  For $\V{3}$, since $E_\alpha=-2r_\alpha I_m/m$, one has
\[
 \sum_\alpha\sum_{k\leqslant2<l}|(Ld)_{\alpha;kl}|^2
 =\frac4m\{(n-2c^2)^2+4c^4\}(r_1^2+r_2^2).
\]

Finally, for $\V{6}$, the Pauli identity
\[
 \sum_{\beta=1}^2(P_\beta C_\mu)_{kp}(P_\beta)_{ij}
 =\delta_{ik}(C_\mu)_{jp}+\delta_{jk}(C_\mu)_{ip}
  -\delta_{ij}(C_\mu)_{kp}
\]
gives the three sums
\[
 \begin{aligned}
 \sum_\mu|(Ld)_{\mu;12}|^2
 &=2(2c^2-1)^2\sum_\mu|C_\mu|^2,\\
 \sum_\mu\sum_{k\leqslant2<l}|(Ld)_{\mu;kl}|^2
 &=\{4(c^2-1)^2+2m+4\}\sum_\mu|C_\mu|^2,\\
 \sum_\mu\sum_{3\leqslant k<l}|(Ld)_{\mu;kl}|^2
 &=2(m-1)\sum_\mu|C_\mu|^2.
 \end{aligned}
\]
These are the sums before the outer factor $2$ in
\eqref{eq:ds-ordered-norm}.  Multiplying by that factor and dividing by
$|d|^2=2\sum_\mu|C_\mu|^2$, we obtain the last row of
\eqref{eq:ds-contraction-table}.

Adding the three columns in \eqref{eq:ds-contraction-table}, setting
$m=n-2$ and $c^2=n/6$, and including $\V{2}$ and $\V{5}$, we obtain
\bigskip
\[
 \begin{array}{c|c}
 \text{normal subspace}&|Ld|^2/|d|^2\quad(d\ne0)\\ \hline
 \RR e_0&n^2 \cdot \left(18-\dfrac{20}{n}\right)\\[2mm]
 \V{1}&n^2 \cdot \dfrac{2(n+2)}{3n}\\[2mm]
 \V{2}&n^2 \cdot \dfrac4n\\[2mm]
 \V{3}&n^2 \cdot \dfrac{2(n+10)}{9n}\\[2mm]
 \V{4}&n^2 \cdot \dfrac{2(n+6)}{9n}\\[2mm]
 \V{5}&n^2 \cdot \dfrac4n\\[2mm]
 \V{6}&n^2 \cdot \dfrac{n+4}{3n}.
 \end{array}
\]

\bigskip

In the notation of Lemma~\ref{lem:spectrum}, this is
\begin{equation}\label{eq:ds-eigenvalues}
 |L(xe_0)|^2=n^2\lambda_nx^2,
 \qquad
 |Ld|^2=n^2\mu_j|d|^2
 \quad(d\in\V{j}).
\end{equation}

\par\medskip
\noindent\textbf{C.3 Orthogonality of the normal images}

It remains to check that the squared norms in
\eqref{eq:ds-eigenvalues} have no cross terms.  Define
\[
 B_L(u,v):=\ip{Lu}{Lv}.
\]
Since $L=\Phiv^0+K$, we have
\[
 B_L(u,v)={}\ip{\Phiv^0(u)}{\Phiv^0(v)} +\ip{\Phiv^0(u)}{K(v)}+\ip{K(u)}{\Phiv^0(v)}+\ip{K(u)}{K(v)}.
\]
By polarizing the identity in Lemma~\ref{lem:flat-ricci-norm}, we get
\[
 \ip{\Phiv^0(u)}{\Phiv^0(v)}=4n\ip{u}{v}.
\]
Thus the first term vanishes when $u$ and $v$ belong to distinct
summands of the orthogonal decomposition \eqref{eq:app-decomposition}.
The blockwise reasons for the remaining three terms are summarized
below.
\medskip
\begingroup\small\setlength{\arraycolsep}{4pt}
\[
 \begin{array}{c|l}
 \text{pair of summands}&\text{reason for vanishing}\\ \hline
 \V{2}\ ;\ \RR e_0,\V{1},\V{3},\V{4}
   &\text{diagonal blocks are orthogonal to mixed blocks}\\
 \V{1}\ ;\ \RR e_0,\V{3},\V{4}
   &\tr E_1^0=\tr E_2^0=0\\
 \V{3}\ ;\ \RR e_0,\V{4}
   &\tr P_1=\tr P_2=0\\
 \RR e_0\ ;\ \V{4}
   &\ip{(P_1,P_2)}{(P_1,-P_2)}=0,
     \quad\ip{P_1}{P_2}=0\\
 \text{active summands;inactive summands}
   &\text{their images have different normal indices}\\
 \V{5}\ ;\ \V{6}
   &\text{diagonal blocks are orthogonal to mixed blocks}.
 \end{array}
\]
\endgroup

\medskip
Here ``diagonal'' and ``mixed'' refer to the $2\times2$, $m\times m$
diagonal blocks and the $2\times m$ off-diagonal blocks, respectively.
By the explicit formulas in Lemma~\ref{lem:flat-ricci-norm} and
\eqref{eq:ds-components-12}--\eqref{eq:ds-components-lower},
$\Phi^0$ and $K$ preserve the block type of each normal summand.
Hence the blockwise orthogonality recorded in the table applies to the
three remaining terms in the expansion of $B_L(u,v)$. The listed cases
cover every pair of distinct summands. Therefore, if
$d=xe_0+\sum_{j=1}^6z_j$ with $z_j\in\V{j}$, then
\begin{equation}\label{eq:ds-full-quadratic-form}
 |Ld|^2=|L(xe_0)|^2+\sum_{j=1}^6|Lz_j|^2.
\end{equation}

Combining \eqref{eq:ds-eigenvalues} with
\eqref{eq:ds-full-quadratic-form}, we obtain
\[
 |Ld|^2
 =n^2\lambda_nx^2+n^2\sum_{j=1}^6\mu_j|z_j|^2.
\]
This is the norm decomposition used in the proof of
Lemma~\ref{lem:spectrum}.

\medskip

\section*{Acknowledgments}

This research was supported by the National Natural Science Foundation of China, Grant Nos. 12471051, 12171423, and 12071424.

\end{document}